\newtheorem{theorem}{Theorem}[section]
\newtheorem{proposition}[theorem]{Proposition}
\newtheorem{lemma}[theorem]{Lemma}
\newtheorem{corollary}[theorem]{Corollary}
\newtheorem{definition}[theorem]{Definition}
\newtheorem{remark}[theorem]{Remark}
\title{(First Draft) Amenability, Nuclearity and Tensor Products of $C^{\ast}$-Algebraic Fell Bundles under the Unified Viewpoint of  the Fell-Doran Induced Representation Theory}
\author{Weijiao He}
\begin{document}

\maketitle

\begin{abstract}
In this paper we study amenability, nuclearity and tensor products of $C^{\ast}$-Fell bundles by the method of induced representation theory.
\end{abstract}

\section*{Introduction}

$C^{\ast}$-algebraic Fell bundles over locally compact groups, which we denoted by $\mathscr{B}$,  were defined and studied by Fell and Doran in \cite{MR936629}. A lot of definitions and theorems on locally compact groups can be generalized on $\mathscr{B}$. In \cite{MR1891686}, Exel and Ng studied the amenability and the approximation property (AP) of $\mathscr{B}$, which were the generalization of the amenability and approximation property of locally compact groups. In the recent years, the problems about $\mathscr{B}$ which are in the connection with amenability, tensor products and nuclearity of $C^{\ast}$-algebras are studied under varied conditions: see Abadie, Buss and Ferraro \cite{vjnj} for the case that $\mathscr{B}$ is over locally compact groups; Buss, Echterhoff and Willett \cite{vnkj} for the case that $\mathscr{B}$ is semidirect product bundles over locally compact groups; Abadie-Vicens \cite{hhncjdl}, Abadie, Buss and Ferraro \cite{nkjdklkwa}, Ara, Exel, and Katsura \cite{aaab}, Exel \cite{MR3699795}, He \cite{hsfh}, McKee, Skalski, Todorov and Turowska\cite{MR3860401} for the case that $\mathscr{B}$ is over discrete groups;  Lalonde \cite{cnkjllkew} and  Sims and Williams \cite{cnnjlilid} \cite{fhkldsll} for the case that $\mathscr{B}$ is separable and is over secound countable $groupoids$; Takeishi \cite{aaaaa} for the case that $\mathscr{B}$ is over $\acute{E}$$tale$ $groupoids$. 

The main objective of this paper is, letting $\mathscr{B}$ to be a $C^{\ast}$-algebraic Fell bundle over locally compact groups, to study the relations between the amenability of $\mathscr{B}$, tensor products of $\mathscr{B}$ and $C^{\ast}$-algebras, and the nuclearity of the full or regular $C^{\ast}$-algebras of $\mathscr{B}$, by the method of induced representation theory developed by Fell and Doran in \cite{MR936629}. Corollary \ref{kjdsnf} and Theorem \ref{vnnjsdliclsasod} give partial solution to Claire Anantharaman-Delaroche \cite[Problem 9.2(d)]{ejjlklds}, which is partially the motivation of this paper.

In section 1, we give a brief review of the theory of induced representation of $C^{\ast}$-algebraic Fell bundles and Fell-Doran's Imprimitivity Theorem.

In section 2, we prove that the Moraita equivalence constructed by Fell-Doran's Imprimitivity Theorem preserves the regular representations. 

In section 3, we develop the theory of the tensor products of $C^{\ast}$-algebraic Fell bundles and $C^{\ast}$-algebras. In the case that $G$ is discrete, this theory is developed in some papers we referred in the first paragraph. Our treatment based on the induced representation theory is new, strikingly easier, and generalize the results which hold for discrete bundles.

In section 4, we give the definition of ultra-approximation property (UAP) of $C^{\ast}$-algebraic Fell bundles, which is weaker than the approximation property and is a sufficient condition by which the bundle is amenable. Based on the results of Proposition \ref{vnnisfdliasiooa} and Proposition \ref{cnhjfdskhjfsd}, in Remark \ref{vkjliidasdsjigfo} we show that there are plenty ``non-trivial'' examples of $C^{\ast}$-algebraic bundles having UAP, particularly are amenable. Finally, we combine Proposition \ref{vnnisfdliasiooa} and Proposition \ref{cnhjfdskhjfsd} with the results of section 2 and section 3 to prove our main result, i.e. Theorem \ref{vnnjsdliclsasod}.

\section{Background}
Throughout this paper, $G$ is a locally compact group and we choose once for all a Haar measure on $G$, $R^G$ is the left-regular representation of $G$, and $H \subset G$ a closed subgroup of $G$. In addition, we will choose once for all a continuous everywhere positive $H$-rho function $\rho$ on $G$ and denote by $\rho^{\sharp}$ the regular Borel measure on $G/H$ constructed from $\rho$. Assume $\mathscr{B}$ is a $C^{\ast}$-algebraic Fell bundle over $G$ with fibers $\{B_x\}_{x \in G}$, and let $\mathscr{B}_H$ be the restricted bundle of $\mathscr{B}$ to $H$. Let $\mathscr{L}(\mathscr{B})$, $\mathscr{L}_1(\mathscr{B})$ and $\mathscr{L}_2(\mathscr{B})$ to denote the space of continuous cross-sections vanishing at infinite point, absolute integral cross-sections and squared-integrable cross-sections respectively. For details on the definition of $C^{\ast}$-algebraic Fell bundles over locally compact group $G$ and the cross-sectional algebras we refer the reader to \cite[\S Chapter VIII]{MR936629}.

For $\ast$-algebra $A$ ( resp. $\ast$-algebraic bundle $\mathscr{B}$), let $T$ and $S$ be $\ast$-representations of $A$ ( resp.$\mathscr{B}$), we use symbol $T \leq S$ to indicate that $S$ weakly contains $T$.

 If $T$ is $\ast$-representation of $C^{\ast}$-algebraic bundle $\mathscr{B}$, we use symbol $\widetilde{T}$ to denote the integrated form of $T$ (see \cite[\S VIII.12]{MR936629}). But sometimes we directly regard $T$ as $\ast$-representation of $\mathscr{L}_2(G, \mathscr{B})$ or $C^{\ast}(\mathscr{B})$, i.e its integrated form. 

In this section we review some basic notions of the theory on induced representations of $C^{\ast}$-algebraic Fell bundles which are treated in details in \cite[\S XI.9]{MR936629}. 

Let $S$ be a $\mathscr{B}$-$positive$ (see \cite[\S XI.8]{MR936629}) non-degenerate $\ast$-representation of $\mathscr{B}_H$.  Let $Z_{\alpha}$ be the algebraic direct sum $\sum_{x \in \alpha}^{\oplus}(B_x \otimes X(S))$ of the algebraic tensor products $B_x \otimes X(S)$, we introduce into $Z_{\alpha}$ the conjugate-bilinear form $( \ ,\ )_{\alpha}$ by
\begin{equation*}
(b \otimes \xi, c \otimes \eta)_{\alpha}=(\rho(x)\rho(y))^{-1/2}(S_{c^{\ast}b}\xi, \eta)
\end{equation*}
($x, \ y \in \alpha; \ b \in B_x; \ c \in B_y; \ \xi, \ \eta \in X(S)$). One can form a Hilbert space $Y_{\alpha}$ by factoring out from $Z_{\alpha}$ the null space of $( \ , \ )_{\alpha}$ and completing, and a Hilbert bundle $\mathscr{Y}$ over $G/H$ with fibers $Y_{\alpha}$. Let $\kappa_{\alpha}: Z_{\alpha} \to Y_{\alpha}$ be the quotient map for each $\alpha \in G/H$. For each $c \in B_x$, there is continuous map $\tau_{c}: Y \to Y$ defined by
\begin{equation*}
\tau_c(\kappa_{\alpha}(b \otimes \xi))=\kappa_{x \alpha}(cb \otimes \xi) \ \ \ \ (b \in B_{y}, y \in \alpha; \xi \in X(S)).
\end{equation*}
Furthermore for each $b \in B$ 
\begin{equation*}
(\alpha \mapsto f(\alpha)) \mapsto (f: \alpha \mapsto \tau_bf(x^{-1}\alpha)) \ \ \ \ (f \in \mathscr{L}_2(G/H; \mathscr{Y}))
\end{equation*}
is a bounded operator on the Hilbert space $\mathscr{L}_2(G/H; \mathscr{Y})$, which we denote by $T_b$, and that $b \mapsto T_b$ is non-degenerate $\ast$-representation of $\mathscr{B}$. We denote this representation $T$ by ${\rm{Ind}}_{\mathscr{B}_H \uparrow \mathscr{B}}(S)$, and say that $T$ is $induced$ from $S$.

By \cite[XI.11.7]{MR936629} every $\ast$-representation $S$ of $B_e$ is $\mathscr{B}$-positive; if in addition $\mathscr{B}$ is saturated, by \cite[XI.11.10]{MR936629} every $\ast$-representation $S$ of $\mathscr{B}_H$ is $\mathscr{B}$-positive. Therefore, in either of the case $H=\{e\}$ or that $\mathscr{B}$ is saturated, ${\rm{Ind}}_{\mathscr{B}_H \uparrow \mathscr{B}}(S)$ exists.

Let $(G,M)$ be a $G$-transformation group. By \cite[\S VIII.7]{MR936629} one can construct the $G, M$ $transformation$ $bundle$ $\mathscr{D}$ $over$ $G$ $derived$ $from$ $\mathscr{B}$, whose fiber $D_x$ for each $x \in G$ is $\mathscr{C}_0(M; B_x)$, i.e the set of continuous functions from $M$ into $B_x$ vanishing at infinity point, with multiplication and involution given by
\begin{equation*}\begin{split}
&(\phi\psi)(m)=\phi(m)\psi(x^{-1}m),
\\& \phi^{\ast}(m)=(\phi(xm))^{\ast}
\end{split}\end{equation*}
($x,y \in G; \phi \in D_x; \psi \in D_y; m \in M$). $A$ $system$ $of$ $imprimitivity$ $for$ $\mathscr{B}$ $over$ $M$ is a pair $\langle T, P \rangle$ where: (i)  $T$ is a non-degenerate $\ast$-representation $T$ of $\mathscr{B}$; (ii) $P$ is a regular $X(T)$-projection-valued Borel measure on $M$; and (iii) we have
\begin{equation}\label{vnnhjordniijnc}
T_bP(W)=P(\pi(b)W)T_b
\end{equation}
for all $b \in B$ and all Borel subsets $W$ of $G/H$. One can show that for any system of imprimitivity  $\langle T, P \rangle$ 
\begin{equation*}
\langle T, P \rangle'_{\phi}=\int_{M}dPm T_{\phi(m)} \ \ \ \ (\phi \in D)
\end{equation*}
is a $\ast$-representation of $\mathscr{D}$, and that $\langle T, P \rangle \mapsto \langle T, P \rangle'$ is a one-to-one correspondence between the unitary classes of systems of imprimitivity and unitary classes of non-degenerate $\ast$-representations of $\mathscr{D}$. For the details and the definition of the integration appearing in (\ref{vnnhjordniijnc}) we refer the reader to \cite[\S VIII.18]{MR936629}. In this paper we will always identify the systems of imprimitivity and non-degenerate $\ast$-representations of $\mathscr{D}$. In the rest of this paper, we assume that we have implicitly chosen a transformation group $(G,M)$ and use the symbol $\mathscr{D}$ to denote the corresponding transformation bundle.

For $M=G/H$ with the left translated action, the transformation bundle plays very important role. Let us provide details. For a $\mathscr{B}$-positive non-degenerate $\ast$-representation $S$ of  $\mathscr{B}_H$, let $\mathscr{Y}$ be the Hilbert bundle over $G/H$ constructed as above, we define a $\mathscr{L}_2(G/H; \mathscr{Y})$-projection valued measure $P$ on $G/H$ by
\begin{equation*}
P(W)f={\rm{Ch}}_Wf \ \ \ \ (f \in \mathscr{L}_2(G/H; \mathscr{Y}))
\end{equation*}
(${\rm{Ch}}_W$ is the characteristic function) for Borel subsets $W \subset G/H$. Let $T={\rm{Ind}}_{\mathscr{B}_H \uparrow \mathscr{B}}(S)$. It is easy to show that $\langle T, P \rangle$ is a system of imprimitivity. If $\mathscr{B}$ is saturated, by \cite[XI.14.18]{MR936629} the correspondence $S \mapsto \langle T, S \rangle$ (up to unitary equivalent classes) is actually Rieffel's inducing process with respect to the pre-$C^{\ast}(\mathscr{B}_H)$-$C^{\ast}(\mathscr{D})$ Hilbert bimodule $\mathscr{L}(\mathscr{B})$. We can conclude that $C^{\ast}(\mathscr{B}_H)$ and $C^{\ast}(\mathscr{D})$ are Morita equivalent. In the case that $G$ is second countable and $\mathscr{B}$ is separable, this theorem is proved by Kaliszewski, Muhly, Quigg and Williams  \cite{ijfjkfw} as an easy consequence of the theory of equivalence of Fell bundles over locally compact groupoids. 

For the details on the definition of the regular representation of $\mathscr{B}$ we refer the readers to Exel and Ng \cite{ MR1891686}. $\mathscr{B}$ is said to be $amenable$ if the regular representation, regarded as a representation of the full $C^{\ast}$-algebra $C^{\ast}(\mathscr{B})$, is faithful. In the same paper, the Fell's Absorption Theorem of the bundle version is proved which we will use frequently: If $T$ is a non-degenerate $\ast$-representation of $\mathscr{B}$ such that $T|B_e$ is faithful, then inner tensor product $T \otimes R^G$ is weakly equivalent to the regular representation of $\mathscr{B}$, where $R^G$ is the left-regular representation of $G$.

\section{A Remark on Fell-Doran's Imprimitivity Theorem}

In this section, we assume that $M=G/H$.

Let $\mathscr{C}$ and $\mathscr{C}'$ be two Banach bundles  with fiber spaces $C$ and $C'$ over the same locally compact space $M'$ with Borel measure $\mu$. If $F: C \to C'$ is a continuous map satisfying: (1) $F|C_x$ is bounded linear map from $C_x$ into $C_x'$; (2) there is constant $K>0$ such that $\|F|C_x\| \leq K$ for all $x \in G$ we say that $F$ is a $\mathscr{C}$-$\mathscr{C}'$ $multiplier$ $of$ $order$ $e$, and we use symbol $\widetilde{F}$ to denote the map from $\mathscr{L}_2(\mathscr{C})$ into $\mathscr{L}_2(\mathscr{C}')$ defined by
\begin{equation*}
\widetilde{F}(f)(x)=F(f(x)) \ \ \ \ (f \in \mathscr{L}_2(\mathscr{C}), x \in M).
\end{equation*}
Furthermore, if $F$ is norm-preserving and bijective, it is routine to verify that $\widetilde{F}$ is unitary, and we say that $F$ is $\mathscr{C}$-$\mathscr{C}'$ $unitary$ $multiplier$ $of$ $order$ $e$.

\begin{lemma}\label{cbnujhdfliilds}
Let $S$ be a non-degenerate $\ast$-representation of $\mathscr{B}_H$, $U$ a non-degenerate unitary representation of $G$. Let $\langle T,P \rangle$ be the system of imprimitivity induced from $S$, then the system of imprimitivity induced from the inner tensor product $S \otimes (U|H)$ is unitarily equivalent to $(T \otimes U, P \otimes 1_{\mathcal{O}(X(U))})$.
\end{lemma}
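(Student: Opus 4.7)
The plan is to produce the unitary equivalence by identifying the Hilbert bundle $\mathscr{Y}'$ attached to the induced representation of $S \otimes (U|H)$ with the bundle $\mathscr{Y} \otimes X(U)$ (meaning $\mathscr{Y}$ tensored fibrewise with the trivial bundle of fibre $X(U)$), via an explicit fibrewise map that incorporates $U$, and then checking that it intertwines both the representation and the projection-valued measure.

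First I would write out $\mathscr{Y}'$ explicitly: its pre-fibre is $Z'_{\alpha}=\sum_{x\in\alpha}^{\oplus}B_{x}\otimes (X(S)\otimes X(U))$ with
\begin{equation*}
(b\otimes\xi\otimes v,c\otimes\eta\otimes w)_{\alpha}=(\rho(x)\rho(y))^{-1/2}(S_{c^{*}b}\xi,\eta)(U_{y^{-1}x}v,w),
\end{equation*}
since $c^{*}b\in B_{y^{-1}x}$ with $y^{-1}x\in H$. I would then define, for each $\alpha\in G/H$, a map $\Phi_{\alpha}\colon Z'_{\alpha}\to Y_{\alpha}\otimes X(U)$ by $\Phi_{\alpha}(b\otimes\xi\otimes v)=\kappa_{\alpha}(b\otimes\xi)\otimes U_{x}v$ for $b\in B_{x}$, $x\in\alpha$. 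A direct inner-product calculation, using $(U_{x}v,U_{y}w)=(U_{y^{-1}x}v,w)$, shows $\Phi_{\alpha}$ preserves the form, so it descends to an isometry $Y'_{\alpha}\to Y_{\alpha}\otimes X(U)$; surjectivity follows from $U_{x}$ being unitary. Next I would verify that $\alpha\mapsto\Phi_{\alpha}$ assembles into a $\mathscr{Y}'$--$(\mathscr{Y}\otimes X(U))$ unitary multiplier of order $e$, using continuity of $x\mapsto U_{x}$ together with the explicit description of the continuity structure on $\mathscr{Y}$ from \cite[XI.9]{MR936629}, and hence obtain a unitary
\begin{equation*}
\widetilde{\Phi}\colon \mathscr{L}_{2}(G/H;\mathscr{Y}')\longrightarrow \mathscr{L}_{2}(G/H;\mathscr{Y})\otimes X(U).
\end{equation*}

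The intertwining with the imprimitivity datum is then a clean check. For the projection-valued measures, $P'(W)$ acts on $\mathscr{L}_{2}(G/H;\mathscr{Y}')$ by multiplication by $\mathrm{Ch}_{W}$, which under $\widetilde{\Phi}$ becomes multiplication by $\mathrm{Ch}_{W}$ on the target, i.e.\ $P(W)\otimes 1_{\mathcal{O}(X(U))}$. For the representations, it suffices to test on elementary sections: if $b\in B_{z}$ and $a\in B_{x}$ with $x\in z^{-1}\alpha$, the two sides evaluated at $\alpha$ reduce to
\begin{equation*}
\Phi_{\alpha}\bigl(\kappa'_{\alpha}(ba\otimes\xi\otimes U_{x^{-1}}v)\bigr)=\kappa_{\alpha}(ba\otimes\xi)\otimes U_{zx}v=(T_{b}\otimes U_{z})\bigl(\kappa_{\alpha}(a\otimes\xi)\otimes U_{x}v\bigr),
\end{equation*}
where the crucial identity $U_{zx}=U_{z}U_{x}$ does all the work.

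The main obstacle is not the algebraic matching, which is essentially forced by the unitarity of $U$, but the bundle-level continuity: one must confirm that the fibrewise maps $\Phi_{\alpha}$ respect the topology of $\mathscr{Y}'$ so that $\widetilde{\Phi}$ really is a bundle unitary multiplier in the sense of the definition recalled at the start of this section. This is a careful but routine traversal of the Fell--Doran construction; once it is in place, the two intertwining identities above finish the proof.
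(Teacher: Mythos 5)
Your proposal is correct, but it takes a genuinely different route from the paper. The paper does not construct the intertwining unitary by hand: it passes to the outer tensor product $S\underset{o}{\otimes}U$, a representation of the product bundle $\mathscr{B}\times G$ over $G\times G$, identifies $(G\times G)/(H\times G)$ with $G/H$ so that the corresponding induced representation becomes $\mathrm{Ind}_{\mathscr{B}_H\uparrow\mathscr{B}}(S)\underset{o}{\otimes}U$ up to a unitary $E$, and then imports a unitary bundle multiplier $F$ from the proof of \cite[XI.13.2]{MR936629} applied to the diagonal subgroups $D_H\subset D_G$, composing $J=(1_{\mathscr{Y}}\otimes E^{\ast})\circ F$ and disposing of the projection-valued-measure claim with an ``easy to see''. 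You instead build the unitary explicitly and fibrewise, $\Phi_{\alpha}(b\otimes\xi\otimes v)=\kappa_{\alpha}(b\otimes\xi)\otimes U_{\pi(b)}v$, check that it preserves the defining forms (which, as a by-product, shows the form for $S\otimes(U|H)$ is positive whenever that for $S$ is, so the induced object exists), and verify both intertwinings directly. Your route is more elementary and self-contained --- it avoids XI.13.2 and the $G\times G$ detour --- and the explicit formula makes the measure-intertwining $\widetilde{J}Q\widetilde{J}^{\ast}=P\otimes 1$, which the paper only asserts, completely transparent; the price is that the bundle-continuity (multiplier-of-order-$e$) verification, which the paper inherits from Fell--Doran's machinery, must be done by hand, and you rightly flag it as the main remaining labour. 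One bookkeeping slip in your displayed check: with $U_{x^{-1}}v$ in the third slot the middle term should read $\kappa_{\alpha}(ba\otimes\xi)\otimes U_{z}v$ (put $v$ in the third slot if you want $U_{zx}v$), and on the right-hand side the vector should sit in the fibre over $z^{-1}\alpha$, i.e.\ $\kappa_{z^{-1}\alpha}(a\otimes\xi)\otimes U_{x}v$, acted on by the fibre map $\tau_{b}\otimes U_{z}$ rather than the operator $T_{b}\otimes U_{z}$; the identity $U_{zx}=U_{z}U_{x}$ you isolate is indeed what makes the corrected computation close.
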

\begin{proof}
It is easy to verify that the Hilbert bundle over $(G \times G)/(H \times G)$ induced by the outer tensor product $S \underset{o}{\otimes} U$ is $\mathscr{Y}'=\{Y'_{\alpha}=Y_{\alpha} \otimes X(U)\}_{\alpha \in G/H}$ over $G/H$ by identifying $(G \times G)/(H \times G)$ with $G/H$. Then we have a unitary operator $E: X(U) \to X(U)$ such that
\begin{equation}\begin{split}\label{uiehrybhfdu}
& \ \ \ \ (1_{\mathcal{O}(\mathscr{L}_2(\rho^{\sharp}; \mathscr{Y}))} \otimes E^{\ast}) {\rm{Ind}}_{\mathscr{B}_H \times G \uparrow \mathscr{B} \times G}(S \underset{o}{\otimes}U)(1_{\mathcal{O}(\mathscr{L}_2(\rho^{\sharp}; \mathscr{Y}))} \otimes E)
\\&={\rm{Ind}}_{\mathscr{B}_H \uparrow \mathscr{B}}(S) \underset{o}{\otimes} U.
\end{split}\end{equation}
On the other hand, let $\mathscr{Z}=\langle Z, Z_{\alpha} \rangle$ be the Hilbert bundle over $G/H$ induced from $S \otimes (U|H)$, by the proof of \cite[XI.13.2]{MR936629} there is a $\mathscr{Z}$-$\mathscr{Y}'$ unitary multiplier of order $e$ $F: \mathscr{Z} \to \mathscr{Y}'$ such that 
\begin{equation}\label{xhgidfsui}
\widetilde{F} {\rm{Ind}}_{(\mathscr{B}_H \times G)_{D_H} \uparrow (\mathscr{B} \times G)_{D_G}}(S \underset{o}{\otimes}U|D_H)\widetilde{F}^{\ast}={\rm{Ind}}_{\mathscr{B}_H \times G \uparrow \mathscr{B} \times G}(S \underset{o}{\otimes}U)|D_G,
\end{equation}
where $D_G=\{\langle x, x \rangle : x \in G\}$, $D_H=\{\langle x, x \rangle : x \in H\}$. Let  $J: \mathscr{Z} \to \mathscr{Y}'$ be defined by $J=(1_{\mathscr{Y}} \otimes E^{\ast}) \circ F$, where $1_{\mathscr{Y}}$ is the identity map from $\mathscr{Y}$ to itself, then $J$ is unitary $\mathscr{Z}$-$\mathscr{Y}'$ multiplier and by (\ref{uiehrybhfdu}) and (\ref{xhgidfsui}) we have
\begin{equation*}
\widetilde{J} {\rm{Ind}}_{(\mathscr{B}_H \times G)_{D_H} \uparrow (\mathscr{B} \times G)_{D_G}}(S \underset{o}{\otimes}U|D_H)\widetilde{J}^{\ast}=({\rm{Ind}}_{\mathscr{B}_H \uparrow \mathscr{B}}(S) \underset{o}{\otimes} U)|D,
\end{equation*}
hence we have
\begin{equation*}
\widetilde{J}{\rm{Ind}}_{\mathscr{B}_H \uparrow \mathscr{B}}(S \otimes (U|H))\widetilde{J}^{\ast}={\rm{Ind}}_{\mathscr{B}_H \uparrow \mathscr{B}}(S) \otimes U.
\end{equation*}
Let $\langle {\rm{Ind}}_{\mathscr{B}_H \uparrow \mathscr{B}}(S \otimes (U|H)), Q \rangle$ be the system of imprimitivity induced from $S \otimes (U|H)$, it is easy to see that $\widetilde{J} Q \widetilde{J}^{\ast}=P \otimes 1_{\mathcal{O}(X)}$, thus we have
\begin{equation*}
\widetilde{J} \langle {\rm{Ind}}_{\mathscr{B}_H \uparrow \mathscr{B}}(S \otimes (U|H)), Q \rangle \widetilde{J}^{\ast}=\langle T \otimes U, P \otimes 1_{\mathcal{O}(X)} \rangle,
\end{equation*}
 our proof is done. 
\end{proof}

The following lemma is well-known but we did not find reference. Our proof might be easier than the standard proof:

\begin{lemma}\label{cnbjhlsdlilfr}
Let $\mathscr{B}$ be the group bundle $\mathbb{C} \times G$, then the $\ast$-representation of $\mathscr{D}$ which is  induced from $R^H$ is weakly equivalent to the regular representation of $\mathscr{D}$.
\end{lemma}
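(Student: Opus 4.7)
The plan is to apply Lemma \ref{cbnujhdfliilds} with $S$ the trivial one-dimensional representation of $\mathscr{B}_H = \mathbb{C} \times H$ (i.e.\ the trivial character of $H$) and $U = R^G$. In this case $S \otimes (U|H) = R^G|H$, so the lemma gives that the system of imprimitivity induced from $R^G|H$ is unitarily equivalent to $\langle T \otimes R^G,\, P \otimes 1 \rangle$, where $\langle T, P \rangle$ is the system of imprimitivity induced from the trivial character of $H$.

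Next I would identify $\langle T, P \rangle$ concretely. For the trivial bundle $\mathbb{C} \times G$, the induced Hilbert bundle $\mathscr{Y}$ over $G/H$ from the trivial character of $H$ has one-dimensional fibres, so $\mathscr{L}_2(G/H; \mathscr{Y}) \cong L^2(G/H, \rho^{\sharp})$; under this identification, $T$ is the quasi-regular representation of $G$ on $L^2(G/H, \rho^{\sharp})$ and $P$ is multiplication by characteristic functions. Viewing $\langle T, P \rangle$ as a representation of $\mathscr{D}$, its restriction to $D_e = \mathscr{C}_0(G/H)$ is the multiplication representation $\phi \mapsto M_\phi$, which is clearly faithful.

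Now I would apply the bundle version of Fell's Absorption Theorem (recalled at the end of Section~1) to the Fell bundle $\mathscr{D}$: since $\langle T, P \rangle'|D_e$ is faithful, the inner tensor product $\langle T, P \rangle' \otimes R^G$ is weakly equivalent to the regular representation of $\mathscr{D}$. Under the correspondence between representations of $\mathscr{D}$ and systems of imprimitivity for $\mathscr{B}$ over $G/H$, this tensor product corresponds exactly to $\langle T \otimes R^G,\, P \otimes 1 \rangle$, which by the first step is the representation of $\mathscr{D}$ induced from $R^G|H$.

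Finally, to replace $R^G|H$ by $R^H$, I would invoke the standard unitary decomposition $L^2(G) \cong L^2(H) \otimes L^2(G/H, \rho^{\sharp})$ intertwining $R^G|H$ with $R^H \otimes 1$; in particular $R^G|H$ is a multiple of $R^H$, hence weakly equivalent to it. Because the Fell--Doran inducing process respects weak equivalence of non-degenerate $\ast$-representations (this is the content of the ``continuity of inducing'' results in \cite[\S XI]{MR936629}), the representation of $\mathscr{D}$ induced from $R^H$ is weakly equivalent to the one induced from $R^G|H$, and hence to the regular representation of $\mathscr{D}$. The step that I expect to demand the most care is this last appeal to weak continuity of induction at the level of systems of imprimitivity, where the precise citation in Fell--Doran must be pinned down; the rest of the argument is essentially bookkeeping assembled from Lemma \ref{cbnujhdfliilds} and the bundle absorption principle.
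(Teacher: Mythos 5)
Your proof is correct, but it takes a genuinely different route from the paper's. The paper induces directly from $R^H$: by Lemma \ref{cbnujhdfliilds} the system of imprimitivity induced from $R^H \otimes (R^G|H)$ is $\langle T \otimes R^G, P \otimes 1\rangle$, whose integrated form is $Q \otimes R^G$; one application of the bundle absorption theorem (using that $Q|D_e$ is faithful) shows this is weakly equivalent to the regular representation of $\mathscr{D}$, and a \emph{second} application of absorption, now at the level of the group $H$, gives that $R^H \otimes (R^G|H)$ is weakly equivalent to $R^H$, whence $Q$ is weakly equivalent to $Q'$ by the weak-containment continuity of induction. You instead induce from the trivial character of $H$, identify the induced system concretely as the quasi-regular representation together with the multiplication projection-valued measure, apply absorption once for $\mathscr{D}$, and then pass from $R^G|H$ back to $R^H$ by the classical fact that $R^G|H$ is a multiple of (hence weakly equivalent to) $R^H$, again invoking continuity of induction. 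Both arguments need the same continuity-of-induction step, so that is not an extra cost; the substantive difference is your input ``$R^G|H$ weakly equivalent to $R^H$''. Inside the paper this statement is exactly Lemma \ref{xmhvfurfsd}, which is \emph{derived from} the lemma you are proving, so to avoid circularity you must justify it externally, as you do; but note that your unitary decomposition $L^2(G) \cong L^2(H) \otimes L^2(G/H,\rho^{\sharp})$ requires a measurable (regular Borel) cross-section of $G/H$, which for a general locally compact $G$ rests on the Feldman--Greenleaf cross-section theorem --- alternatively one can simply cite the known weak equivalence as in \cite{sjjfdjncj}. The paper's double-absorption trick buys precisely this: it needs no cross-section or decomposition, stays entirely within Fell's absorption principle, and turns $R^G|H \sim R^H$ into a corollary (Lemma \ref{xmhvfurfsd}) rather than an input; your route, in compensation, makes the trivial-character induced system explicit and is essentially the computation the paper later reuses in its proof of Lemma \ref{xmhvfurfsd}.
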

\begin{proof}
Let $\langle T, P \rangle$ be the system of imprimitivity induced by $R^H$. Then by Lemma \ref{cbnujhdfliilds} the system of imprimitivity induced by $R^H \otimes (R^G|H)$ is $\langle T \otimes R^G, P \otimes 1_{\mathscr{L}_2(G)} \rangle$. Let $Q$ and $Q'$ be the integrated forms of $\langle T, P \rangle$ and $\langle T \otimes R^G,  P \otimes 1_{\mathscr{L}_2(G)} \rangle$ respectively, then we have
\begin{equation*}
Q'=Q \otimes R^G.
\end{equation*}
Since $Q|D_e$ is faithful, by the Fell's Absorption Theorem we conclude that $Q'$ is weakly equivalent to regular representation of $\mathscr{D}$. On the other hand, by Fell's Absorption Theorem again $R^H \otimes (R^G|H)$ is weakly equivalent to $R^H$, hence $Q$ is weakly equivalent to $Q'$, and so $Q$ is weakly equivalent to regular representation of $\mathscr{D}$, our proof is done. 
\end{proof}

The following lemma is well known, e.g. see Echterhoff and Raeburn \cite{sjjfdjncj}. Our proof based on Lemma \ref{cbnujhdfliilds} is easier:
\begin{lemma}\label{xmhvfurfsd}
$R^G|H$ is weakly equivalent to $R^H$.
\end{lemma}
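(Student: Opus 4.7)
The plan is to translate the weak-equivalence claim at the level of $H$-representations into one at the level of $\mathscr{D}$-representations, and then exploit that $\mathrm{Ind}_{\mathscr{B}_H\uparrow\mathscr{B}}$ is a Morita equivalence when $\mathscr{B}$ is saturated. I take $\mathscr{B}=\mathbb{C}\times G$ (which is saturated) and let $\mathscr{D}$ be the associated transformation bundle over $G$ with fibres $\mathscr{C}_0(G/H)$. Write $\langle T_0,P_0\rangle$ for the system of imprimitivity induced from the trivial one-dimensional representation $1_H$ of $H$. Applying Lemma~\ref{cbnujhdfliilds} with $S=1_H$ and $U=R^G$, and using the identification $1_H\otimes(R^G|H)=R^G|H$, the system of imprimitivity induced from $R^G|H$ is unitarily equivalent to $\langle T_0\otimes R^G,\,P_0\otimes 1\rangle$.

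Next, I would invoke Fell's Absorption Theorem at the level of the Fell bundle $\mathscr{D}$. The restriction of $\langle T_0,P_0\rangle$, viewed as a representation of $\mathscr{D}$, to the unit fibre $D_e=\mathscr{C}_0(G/H)$ is $P_0$, the multiplication representation on the space carrying $T_0$; this is faithful because $\rho^\sharp$, built from an everywhere positive rho function, has full support. Fell's Absorption (as quoted from Exel--Ng in the Background) therefore gives that $\langle T_0,P_0\rangle\otimes R^G$ is weakly equivalent to the regular representation of $\mathscr{D}$. A short computation with the integrated form shows $\langle T_0,P_0\rangle\otimes R^G=\langle T_0\otimes R^G,\,P_0\otimes 1\rangle$ as representations of $\mathscr{D}$, so the system of imprimitivity induced from $R^G|H$ is weakly equivalent to the regular representation of $\mathscr{D}$. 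By Lemma~\ref{cnbjhlsdlilfr}, the system of imprimitivity induced from $R^H$ is also weakly equivalent to the regular representation of $\mathscr{D}$; hence the two induced systems are weakly equivalent.

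Since $\mathbb{C}\times G$ is saturated, the Fell--Doran Imprimitivity Theorem recalled in the Background realises $S\mapsto \mathrm{Ind}_{\mathscr{B}_H\uparrow\mathscr{B}}(S)$ as Rieffel induction along a $C^{\ast}(H)$-$C^{\ast}(\mathscr{D})$ imprimitivity bimodule, and so preserves weak containment in both directions. We conclude $R^G|H\sim R^H$. The main point to verify is the identification $\langle T_0,P_0\rangle\otimes R^G=\langle T_0\otimes R^G,\,P_0\otimes 1\rangle$ at the level of $\mathscr{D}$-representations, together with the faithfulness of $P_0$ on $D_e$; once these are in place the rest is a direct assembly of the already-cited ingredients, which is why the argument is markedly shorter than the classical direct-integral computation on $L^2(G)$.
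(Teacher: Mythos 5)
Your proposal is correct and follows essentially the same route as the paper: apply Lemma~\ref{cbnujhdfliilds} to the trivial representation of $H$ with $U=R^G$, use the Fell Absorption argument on $\mathscr{D}$ (exactly the argument of Lemma~\ref{cnbjhlsdlilfr}) to see the induced system is weakly equivalent to the regular representation of $\mathscr{D}$, and then descend via the saturated-bundle imprimitivity (Rieffel) equivalence. You merely make explicit two steps the paper leaves implicit, namely the faithfulness of $P_0$ on $D_e$ and the use of the Morita equivalence to transfer weak equivalence back to $\mathscr{B}_H$.
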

\begin{proof}
Let $\langle T, P \rangle$ be the system of imprimitivity induced by the trivial representation $S$ of $H$. By Lemma \ref{cbnujhdfliilds} the system of imprimitivity induced by $S \otimes (R^G|H)$ is $\langle T \otimes R^G, P \otimes 1_{\mathcal{O}(\mathscr{L}_2(G))} \rangle$, then by the same argument of the proof of Lemma \ref{cnbjhlsdlilfr} $\langle T \otimes R^G, P \otimes 1_{\mathcal{O}(\mathscr{L}_2(G))} \rangle$ is weakly equivalent to the regular representation of $\mathscr{D}$. Thus by  Lemma \ref{cnbjhlsdlilfr} $S \otimes (R^G|H)$ is weakly equivalent to $R^H$, but $S \otimes (R^G|H)=R^G|H$, the proof is done.
\end{proof}

The following corollary improve Echterhoff and Quigg \cite[Proposition 6.3]{rjnkllds} in which $G$ is discrete and $\mathscr{B}$ has AP:
\begin{corollary}\label{kjdsnf}
Assume $\mathscr{B}$ is saturated and amenable. If for any non-degenerate $\ast$-representation of $\mathscr{B}_H$ we have ${\rm{Ind}}_{\mathscr{B}_H \uparrow \mathscr{B}}(S)|H \geq S$ , then $\mathscr{B}_H$ is amenable. In particular, either if $H$ is normal closed subgroup or $G/H$ is discrete, $\mathscr{B}_H$ is amenable.
\end{corollary}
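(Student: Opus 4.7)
The plan is to prove the main implication via the weak containment chain
\[
S \leq {\rm{Ind}}_{\mathscr{B}_H \uparrow \mathscr{B}}(S)|H \;\leq\; \Lambda^{\mathscr{B}}|H \;\sim\; \Lambda^{\mathscr{B}_H},
\]
valid for every non-degenerate $\ast$-representation $S$ of $\mathscr{B}_H$ (where $\Lambda^{\mathscr{B}}$ and $\Lambda^{\mathscr{B}_H}$ denote the regular representations). Such a chain yields $S \leq \Lambda^{\mathscr{B}_H}$ for every $S$, which is the amenability of $\mathscr{B}_H$. The first step is the standing hypothesis. The second combines ${\rm{Ind}}(S) \leq \Lambda^{\mathscr{B}}$ (the amenability of $\mathscr{B}$) with the elementary observation that restriction from $\mathscr{B}$-representations to $\mathscr{B}_H$-representations preserves weak containment, since matrix coefficients of $T|H$ are just restrictions to $\mathscr{B}_H$ of matrix coefficients of $T$.

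To establish the key equivalence $\Lambda^{\mathscr{B}}|H \sim \Lambda^{\mathscr{B}_H}$, I will fix any non-degenerate $\ast$-representation $T$ of $\mathscr{B}$ with $T|B_e$ faithful. The bundle version of Fell's absorption theorem (reviewed in Section~1) applied to $\mathscr{B}$ gives $\Lambda^{\mathscr{B}} \sim T \otimes R^G$, and applied to $\mathscr{B}_H$ (noting that $(T|H)|B_e = T|B_e$ is still faithful) gives $\Lambda^{\mathscr{B}_H} \sim (T|H) \otimes R^H$. Restricting the first equivalence to $H$ produces $\Lambda^{\mathscr{B}}|H \sim (T|H) \otimes (R^G|H)$, and Lemma \ref{xmhvfurfsd} together with the general principle that inner tensor product with a fixed bundle representation preserves weak equivalence of the group factor closes the chain via $(T|H) \otimes (R^G|H) \sim (T|H) \otimes R^H$.

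For the ``in particular'' assertions it remains to check that the hypothesis ${\rm{Ind}}(S)|H \geq S$ holds automatically in both scenarios; saturation of $\mathscr{B}$ already ensures ${\rm{Ind}}(S)$ is defined for every non-degenerate $S$. When $G/H$ is discrete the identity coset $\{H\}$ is clopen, so the spectral projection $P(\{H\})$ of the associated system of imprimitivity cuts off a closed $\mathscr{B}_H$-invariant subspace of $\mathscr{L}_2(G/H;\mathscr{Y})$ canonically identifiable with $X(S)$; a direct unwinding of the definitions of $\tau_c$ and $T_b$ shows that ${\rm{Ind}}(S)|H$ restricts to $S$ itself on this subspace, and $S$ is in fact a (sub)representation of ${\rm{Ind}}(S)|H$. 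When $H$ is closed normal, the relation $h^{-1}gH = gH$ for $h\in H$ forces $T_h$ to act fibrewise on $\mathscr{L}_2(G/H;\mathscr{Y})$, and a direct calculation identifies ${\rm{Ind}}(S)|H$ with the direct integral $\int_{G/H}^{\oplus} S^g \, d\rho^{\sharp}(gH)$ of the conjugate representations $S^g(h) = S(g^{-1}hg)$; continuity of $g \mapsto S^g$ in the Fell topology together with $S = S^e$ and the fact that every neighbourhood of $e$ in $G/H$ has positive measure then force $S \leq {\rm{Ind}}(S)|H$.

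The step I expect to be most delicate is making the direct-integral / Fell-topology-continuity argument for the normal case fully rigorous in the bundle setting, and secondarily confirming that inner tensor product with a fixed bundle representation preserves weak equivalence of the group factor. By contrast, the discrete case reduces to a concrete subspace argument, and the chain of Fell absorptions is routine once the preparatory lemmas of this section are in place.
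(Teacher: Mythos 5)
Your proof of the main implication is correct and draws on the same toolbox as the paper --- the bundle version of Fell's absorption theorem, Lemma \ref{xmhvfurfsd}, and the stability of weak containment under restriction to $\mathscr{B}_H$ and under tensoring --- but organizes it differently. The paper fixes a faithful $S$, uses Lemma \ref{cbnujhdfliilds} together with \cite[XI.13.2]{MR936629} to write ${\rm{Ind}}_{\mathscr{B}_H \uparrow \mathscr{B}}(S\otimes(R^G|H))|\mathscr{B}_H={\rm{Ind}}_{\mathscr{B}_H \uparrow \mathscr{B}}(S)|\mathscr{B}_H\otimes(R^G|H)$, identifies this (via Lemma \ref{xmhvfurfsd} and absorption) with the regular representation of $\mathscr{B}_H$ up to weak equivalence, and then uses amenability through the faithfulness of ${\rm{Ind}}_{\mathscr{B}_H \uparrow \mathscr{B}}(S\otimes(R^G|H))$ on $C^{\ast}(\mathscr{B})$. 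You instead isolate the statement $\Lambda^{\mathscr{B}}|\mathscr{B}_H\sim\Lambda^{\mathscr{B}_H}$ (writing $\Lambda$ for the regular representations, as you do) and use amenability only in the form ${\rm{Ind}}_{\mathscr{B}_H \uparrow \mathscr{B}}(S)\leq\Lambda^{\mathscr{B}}$; this avoids the induction-commutes-with-tensoring step and the implicit verification that ${\rm{Ind}}_{\mathscr{B}_H \uparrow \mathscr{B}}(S)|B_e$ is faithful, at the cost of invoking the principle that tensoring a fixed bundle representation against weakly equivalent group representations preserves weak equivalence --- a principle the paper also uses implicitly, so this is not a loss of rigor relative to the paper. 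Net effect: an equivalent and arguably slightly cleaner arrangement of the same argument.

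The genuine issue is in the ``in particular'' part. The paper settles it by citing \cite[XI.14.21]{MR936629} and \cite[XI.12.8]{MR936629}, which state precisely that $S\leq{\rm{Ind}}_{\mathscr{B}_H \uparrow \mathscr{B}}(S)|\mathscr{B}_H$ when $G/H$ is discrete or $H$ is normal; you attempt to reprove these. Your discrete-case sketch (the invariant subspace ${\rm{ran}}\,P(\{eH\})\cong Y_{eH}\cong X(S)$ carrying a copy of $S$) is essentially right. The normal case, however, does not work as written: for a general Fell bundle the formula $S^g(h)=S(g^{-1}hg)$ is meaningless, since $S$ is defined only on the fibers $B_h$ and there is no canonical map $B_h\to B_{g^{-1}hg}$. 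What the fibrewise action of $\mathscr{B}_H$ (for $H$ normal) actually produces is a decomposition of ${\rm{Ind}}_{\mathscr{B}_H \uparrow \mathscr{B}}(S)|\mathscr{B}_H$ over $G/H$ into the representations of $\mathscr{B}_H$ on the fibers $Y_{gH}$ of the induced Hilbert bundle, and it is this continuous field whose direct integral must be shown to weakly contain the fiber representation at $eH$ (itself to be identified with $S$). Moreover, the claim that a direct integral of a continuous field of representations weakly contains every fiber over the support of the measure is itself a nontrivial statement requiring proof. Since these are exactly the results supplied by Fell--Doran, the efficient repair is to cite \cite[XI.12.8]{MR936629} and \cite[XI.14.21]{MR936629} as the paper does, or else to carry out the fibrewise argument in the corrected form just described.
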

\begin{proof}
Let $S$ be a faithful non-degenerate $\ast$-representation of $C^{\ast}(\mathscr{B}_H)$. By Lemma \ref{xmhvfurfsd} and \cite[XI.13.2]{MR936629} 
\begin{equation*}
{\rm{Ind}}_{\mathscr{B}_H \uparrow \mathscr{B}}(S \otimes (R^G|H))|\mathscr{B}_H={\rm{Ind}}_{\mathscr{B}_H \uparrow \mathscr{B}}(S)|\mathscr{B}_H \otimes (R^G|H)
\end{equation*}
is weakly equivalent to regular representation of $\mathscr{B}_H$. On the other hand, since $\mathscr{B}$ is amenable, ${\rm{Ind}}_{\mathscr{B}_H \uparrow \mathscr{B}}(S \otimes (R^G|H))$ is faithful $\ast$-representation of $C^{\ast}(\mathscr{B})$, thus we have 
\begin{equation*}
{\rm{Ind}}_{\mathscr{B}_H \uparrow \mathscr{B}}(S)|\mathscr{B}_H \otimes (R^G|H) \geq {\rm{Ind}}_{\mathscr{B}_H \uparrow \mathscr{B}}(S)|\mathscr{B}_H \geq S,
\end{equation*}
thus $\mathscr{B}_H$ is amenable.

The other parts of the proof is completed by the combination of \cite[XI.14.21]{MR936629} and \cite[XI.12.8]{MR936629}, which state that either if $G/H$ is discrete or $H$ is normal closed subgroup then for any non-degenerate $\ast$-representation $S$ of $\mathscr{B}_H$ we have $S \leq {\rm{Ind}}_{\mathscr{B}_H \uparrow \mathscr{B}}(S)|H$.
\end{proof}

\begin{theorem}\label{vnkjlslinvrfd}
Assume that $\mathscr{B}$ is saturated. Let $\langle T, P \rangle$ be the system of imprimitivity induced by $S$ which is non-degenerate $\ast$-representation of $\mathscr{B}_H$, then $\langle T, P \rangle$ is weakly equivalent to regular representation of $\mathscr{D}$ if and only if $S$ is weakly equivalent to regular representation of $\mathscr{B}_H$.
\end{theorem}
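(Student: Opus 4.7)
The plan is to reduce the biconditional to a single identification: that $\mathrm{Ind}_{\mathscr{B}_H \uparrow \mathscr{B}}(R^{\mathscr{B}_H})$ is weakly equivalent to the regular representation of $\mathscr{D}$. Because Fell--Doran's Imprimitivity Theorem (recalled in the Background) exhibits $\mathrm{Ind}_{\mathscr{B}_H \uparrow \mathscr{B}}$ as Rieffel induction via the $C^{\ast}(\mathscr{B}_H)$--$C^{\ast}(\mathscr{D})$ imprimitivity bimodule $\mathscr{L}(\mathscr{B})$, the induction map furnishes a bijection between unitary equivalence classes of non-degenerate $\ast$-representations of the two algebras that preserves and reflects weak containment and weak equivalence (this is the standard Rieffel correspondence on primitive ideal spaces of Morita equivalent $C^{\ast}$-algebras). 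Once the identification above is in hand, both directions follow simultaneously: $S \sim R^{\mathscr{B}_H}$ is equivalent to $\mathrm{Ind}(S) \sim \mathrm{Ind}(R^{\mathscr{B}_H})$, hence to $\langle T, P \rangle \sim R^{\mathscr{D}}$.

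To prove the identification I would mimic the strategy of Lemma \ref{cnbjhlsdlilfr}. Pick any non-degenerate $\ast$-representation $S_0$ of $\mathscr{B}_H$ whose restriction $S_0|B_e$ is faithful (for instance a faithful representation of $C^{\ast}(\mathscr{B}_H)$, since by Exel--Ng the natural map $B_e \hookrightarrow C^{\ast}(\mathscr{B}_H)$ is injective), and write $\langle T_0, P_0 \rangle$ for the system of imprimitivity induced from $S_0$. By the bundle version of Fell's absorption applied to $\mathscr{B}_H$, $S_0 \otimes R^H$ is weakly equivalent to $R^{\mathscr{B}_H}$; by Lemma \ref{xmhvfurfsd}, $R^H \sim R^G|H$; and by Lemma \ref{cbnujhdfliilds}, the system of imprimitivity induced from $S_0 \otimes (R^G|H)$ is unitarily equivalent to $\langle T_0 \otimes R^G, P_0 \otimes 1_{\mathcal{O}(\mathscr{L}_2(G))} \rangle$. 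Chaining these equivalences, and using that induction preserves weak equivalence, gives $\mathrm{Ind}(R^{\mathscr{B}_H}) \sim \langle T_0 \otimes R^G, P_0 \otimes 1 \rangle$. A second application of Fell's absorption, this time to $\mathscr{D}$, closes the loop and gives $\langle T_0 \otimes R^G, P_0 \otimes 1 \rangle \sim R^{\mathscr{D}}$, provided we can verify that $\langle T_0, P_0 \rangle|D_e$ is faithful as a representation of $D_e = \mathcal{C}_0(G/H; B_e)$.

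The hard part is precisely this $D_e$-faithfulness check, and it is where the saturation of $\mathscr{B}$ enters crucially. Unwinding the construction of $\mathscr{Y}_0$, for $\phi \in D_e$ the operator $\int dP_0(m)\, T_{0,\phi(m)}$ acts on $\mathscr{L}_2(G/H; \mathscr{Y}_0)$ as the fibrewise multiplication $f(\alpha) \mapsto \tau_{\phi(\alpha)}(f(\alpha))$, so by continuity of $\phi$ it suffices to prove that for every $\alpha \in G/H$ and every nonzero $b \in B_e$ the map $\tau_b$ does not vanish on $Y_\alpha$. Using the identity
\begin{equation*}
(bc \otimes \xi,\, bc \otimes \xi)_\alpha = \rho(x)^{-1}\bigl(S_{0,(bc)^{\ast}(bc)}\xi, \xi\bigr) \quad (x \in \alpha,\; c \in B_x,\; \xi \in X(S_0))
\end{equation*}
together with the assumed faithfulness of $S_0|B_e$, this reduces to showing $b B_x \neq \{0\}$ for some $x \in \alpha$. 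If instead $b B_x = 0$, then because saturation of $\mathscr{B}$ makes the closed linear span of $B_x B_{x^{-1}}$ equal to $B_e$, we obtain $b B_e = 0$, and an approximate identity of the $C^{\ast}$-algebra $B_e$ then forces $b = 0$, a contradiction. This closes the gap and completes the proof.
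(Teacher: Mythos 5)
Your proposal is correct, and it rests on the same ingredients as the paper's proof — Lemma \ref{cbnujhdfliilds}, Lemma \ref{xmhvfurfsd}, the bundle version of Fell's absorption theorem applied once on the $\mathscr{B}_H$ side and once on the $\mathscr{D}$ side, and the Rieffel correspondence coming from the imprimitivity bimodule $\mathscr{L}(\mathscr{B})$ — but it runs the key identification in the opposite direction. The paper starts with a faithful representation $\langle T',P'\rangle$ of $C^{\ast}(\mathscr{D})$, uses the imprimitivity theorem to write it as induced from some $S'$ which is then faithful on $C^{\ast}(\mathscr{B}_H)$, so the unit-fibre faithfulness needed for absorption on $\mathscr{D}$ (faithfulness of $Q'|D_e$) comes for free; the price is invoking the harder half of the imprimitivity theorem (every system of imprimitivity is induced, with kernels corresponding). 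You instead start on the $\mathscr{B}_H$ side with any $S_0$ whose restriction to $B_e$ is faithful, and pay the price in the form of a direct verification that the induced system of imprimitivity is faithful on $D_e=\mathscr{C}_0(G/H;B_e)$; your computation, namely $\|\kappa_{\alpha}(bc\otimes\xi)\|^2=\rho(x)^{-1}\bigl(S_{0,(bc)^{\ast}(bc)}\xi,\xi\bigr)$ combined with $\overline{\mathrm{span}}\,B_xB_{x^{-1}}=B_e$ and an approximate identity of $B_e$, is correct and is the one place where saturation is used concretely at the fibre level rather than only through the Morita machinery; this step has no counterpart in the paper (even Lemma \ref{cnbjhlsdlilfr} simply asserts the analogous faithfulness without argument). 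Two small points you should write out if you keep your route: passing from nonvanishing of $\tau_{\phi(\alpha_0)}$ on $Y_{\alpha_0}$ to nonvanishing of the integrated operator on $\mathscr{L}_2(G/H;\mathscr{Y}_0)$ requires a continuous cross-section through the relevant vector, continuity of the norm along it, and the fact that $\rho^{\sharp}$ has full support; and both your opening reduction and the paper's closing sentence rely on the standard fact that Rieffel induction along an imprimitivity bimodule preserves and reflects weak containment and weak equivalence, which you at least make explicit while the paper leaves it implicit.
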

\begin{proof}
Let $\langle T', P' \rangle$ be faithful $\ast$-representation of $C^{\ast}(\mathscr{D})$, $S'$ be the $\ast$-representation of $\mathscr{B}_H$ inducing $\langle T', P' \rangle$, and $Q'$ be the integrated form of $\langle T', P' \rangle$. Then $S'$ is faithful $\ast$-representation of $C^{\ast}(\mathscr{B}_H)$, in particular $S'|B_e$ is faithful, then by Lemma \ref{xmhvfurfsd} $S' \otimes (R^G|H)$ is weakly equivalent to regular representation of $\mathscr{B}_H$. On the other hand, by Lemma \ref{cbnujhdfliilds} $\langle Q' \otimes R^G, P' \otimes 1_{\mathscr{L}_2(G)}\rangle$ is induced by $S' \otimes (R^G|H)$, and since $\langle Q' \otimes R^G, P' \otimes 1_{\mathscr{L}_2(G)}\rangle$ is weakly equivalent to the regular representation of $\mathscr{D}$, our proof is completed.
\end{proof}

\section{Tensor Products of $C^{\ast}$-Algebraic Bundles and $C^{\ast}$-Algebras}
In this section we study the tensor product of Fell bundles and $C^{\ast}$-algebras. Our method is not hard to generalize to construct tensor products of $C^{\ast}$-algebras which will be treated in a forthcoming paper \cite{dhhldsw} by the present author. For our goal of this paper we confine our attention on this specific case. 

Let us make some general convention. Let $E$ be a $\ast$-algebra. If $A$ is a $C^{\ast}$-algebra such that there is $\ast$-homomorphism $r_A: E \to A$ such that $r(E)$ is norm-dense in $A$, then we say that $A$ is $\ast$-$quotient$ of $E$, or $A$ is $quotient$ $C^{\ast}$-$algebra$ of $E$. If $B$ is another quotient $C^{\ast}$-algebra of $E$ and $\|r_A(c)\|=\|r_B(c)\|$ for all $c \in E$, then it is easy to see that $r_B(c) \mapsto r_A(c)$ can be extended to faithful $\ast$-homomorphism from $B$ onto $A$. In this case we say that $A$ and $B$ $have$ $same$ $\ast$-$source$, thus if for any $C^{\ast}$-algebras $A$ and $B$ we can prove that they have same $\ast$-source, then we have proved that they are $\ast$-isomorphic.

In this section let $A$ be a fixed $C^{\ast}$-algebra, let $B_x \otimes A$ denote the algebraic tensor product of $B_x$ and $A$ for each $x \in G$. Then we can form a $\ast$-algebraic bundle $\mathscr{B}^d \otimes A=\{B_x \otimes A: x \in G\}$ over $G$ by defining
\begin{equation*}\begin{split}
&(\sum_{i=1}^n b_i \otimes a_i)^{\ast}=\sum_{i=1}^n b_i ^{\ast} \otimes a_i ^{\ast} \ \ \ \ (b_i \in B_x, x \in G; a_i \in A),
\\& (\sum_{i=1}^n b_i \otimes a_i)(\sum_{j=1}^m b'_j \otimes a'_j)=\sum_{i=1}^n\sum_{j=1}^m b_ib'_j \otimes a_i a'_j
\end{split}\end{equation*}  
$(b_i \in B_x, b'_j \in B_y, x,y \in G; a_i, a'_j \in A$. To see these are well defined, we just need to regard $\{B_x \otimes A: x \in G\}$ as subset of algebraic tensor product $C^{\ast}(\mathscr{B}^d) \otimes A$.

In the rest of this section we denote the linear span of $\{x \mapsto f(x)\otimes a: f \in \mathscr{L}(\mathscr{B}), a \in A\}$ by $\Gamma$. 

 For any pre-$C^{\ast}$-seminorm $r$ of $\mathscr{B}^d \otimes A$ we let $B_x \otimes_{r} A$ be the completion of the quotient of $B_x \otimes A$ with respect to the seminorm $r|(B_x \otimes A)$. By Lemma \ref{cnmbjhkdfuuks} and \cite [II.13.18]{MR936628} we can define a $C^{\ast}$-algebraic bundle over $G$ with fibers $B_x \otimes_{r} A$ such that all the members of $\Gamma$ is continuous cross-sections. We denote this $C^{\ast}$-algebraic bundle by $\mathscr{B} \otimes_{r} A$. Notice that if $\mathscr{B} \otimes A$ have pre-$C^{\ast}$-seminorms $r_1$ and $r_2$ such that $r_1|(B_e \otimes A)=r_2|(B_e \otimes A)$, then $r_1=r_2$ and $\mathscr{B} \otimes_{r_1}A=\mathscr{B}\otimes_{r_2}A$.

Therefore $\Gamma$ is norm dense in $\mathscr{L}_1(\mathscr{B} \otimes_r A)$, and $C^{\ast}(\mathscr{B}\otimes_r A)$ is $\ast$-quotient of $\Gamma$.

The proof of the following lemma is routine: 
\begin{lemma}\label{vliedkdsasd}
Let $T$ be a non-degenerate $^\ast$-representation of the $\ast$-algebra $C^{\ast}(\mathscr{B}^d) \otimes A$. By regarding $\{B_x \otimes A\}_{x \in G} \subset C^{\ast}(\mathscr{B}) \otimes A$, for each $x \in G$ we define semi-norm $\sigma_x$ on $B_x \otimes A$ by $T|(B_x \otimes A)$, then $\sigma=\cup_{x \in G}\sigma_x$ is a pre-$C^{\ast}$ seminorm of $\mathscr{B}\otimes A$.
\end{lemma}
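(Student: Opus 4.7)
The plan is to verify directly the axioms of a pre-$C^{\ast}$-seminorm on the $\ast$-algebraic bundle $\mathscr{B}\otimes A$, using nothing more than the fact that $T$ takes values in $\mathcal{B}(\mathcal{H})$ for some Hilbert space $\mathcal{H}$. Concretely, I would set $\sigma_x(c)=\|T(c)\|_{\mathrm{op}}$ for $c\in B_x\otimes A$, where the restriction is well-defined once we fix the natural embedding $B_x\hookrightarrow C^{\ast}(\mathscr{B}^d)$ (each fibre of the discretized bundle sits as a closed subspace of its cross-sectional $C^{\ast}$-algebra in the standard way), so that $B_x\otimes A$ is identified as a linear subspace of the algebraic tensor product $C^{\ast}(\mathscr{B}^d)\otimes A$.

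Before verifying the axioms I would make explicit the compatibility step: for $b_i\in B_x,\ c_j\in B_y,\ a_i,a_j'\in A$, the product $(\sum b_i\otimes a_i)(\sum c_j\otimes a_j')$ and the involution $(\sum b_i\otimes a_i)^{\ast}$, as computed inside the ambient $\ast$-algebra $C^{\ast}(\mathscr{B}^d)\otimes A$, land in $B_{xy}\otimes A$ and $B_{x^{-1}}\otimes A$ respectively, and coincide on the nose with the formulas defining the bundle operations on $\mathscr{B}^d\otimes A$ given in the section. This bookkeeping is exactly what justifies the author's parenthetical remark that $\{B_x\otimes A\}_{x\in G}$ may be regarded as a subset of $C^{\ast}(\mathscr{B}^d)\otimes A$; it is the only point I see where any care is genuinely required.

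Once this identification is in place, each of the four required properties collapses to an operator-norm property of $T$: seminorm status of $\sigma_x$ on each fibre is immediate from linearity of $T$; the submultiplicative relation $\sigma_{xy}(bc)\leq\sigma_x(b)\sigma_y(c)$ follows from multiplicativity, $\|T(b)T(c)\|\leq\|T(b)\|\,\|T(c)\|$; the $\ast$-compatibility $\sigma_{x^{-1}}(b^{\ast})=\sigma_x(b)$ follows from $T(b^{\ast})=T(b)^{\ast}$; and the $C^{\ast}$-identity on the unit fibre, $\sigma_e(b^{\ast}b)=\sigma_x(b)^2$ for $b\in B_x\otimes A$, follows from $\|T(b)^{\ast}T(b)\|=\|T(b)\|^2$ in $\mathcal{B}(\mathcal{H})$. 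There is no substantive obstacle here; the lemma is the tautological observation that any $\ast$-representation of the ambient algebra restricts fibrewise to a pre-$C^{\ast}$-seminorm on the bundle, and the work lies entirely in the identification described in the previous paragraph.
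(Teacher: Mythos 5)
Your verification is correct: with the fibres $B_x\otimes A$ identified inside the ambient $\ast$-algebra $C^{\ast}(\mathscr{B}^d)\otimes A$ (which is exactly how the bundle operations were defined), the seminorm, submultiplicativity, $\ast$-invariance and $C^{\ast}$-identity all follow from the corresponding operator-norm facts for $T$. The paper gives no argument and simply declares the lemma routine, and your write-up is precisely that routine verification, so there is nothing to add.
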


\begin{lemma}\label{cnmbjhkdfuuks}
Let $\sigma$ be a $C^{\ast}$-norm of the $\ast$-algebra $B_e \otimes A$, then there is a pre-$C^{\ast}$-seminorm $\sigma'$ on $\{B_x \otimes A: x \in G\}$ such that $\sigma(c) \leq (\sigma'|B_e \otimes A)(c)$ for all $c \in B_e \otimes A$.
\end{lemma}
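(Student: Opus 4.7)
The plan is to construct a non-degenerate $\ast$-representation of the algebraic tensor product $C^{\ast}(\mathscr{B}^d) \otimes A$ whose restriction to the fiber $B_e \otimes A$ has operator norm dominating $\sigma$, and then invoke (the argument of) Lemma \ref{vliedkdsasd} to convert it into the required pre-$C^{\ast}$-seminorm $\sigma'$. Since every $C^{\ast}$-norm on an algebraic tensor product is dominated by the maximal one, any faithful non-degenerate $\ast$-representation $\phi$ of the $C^{\ast}$-algebra $B_e \otimes_{\sigma} A$ on a Hilbert space $X$ decomposes as $\phi(b \otimes a) = \pi(b)\psi(a)$ for a pair of commuting non-degenerate $\ast$-representations $\pi : B_e \to \mathcal{O}(X)$ and $\psi : A \to \mathcal{O}(X)$, with $\sigma(c) = \|\phi(c)\|$ for every $c \in B_e \otimes A$.

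By \cite[XI.11.7]{MR936629}, $\pi$ is $\mathscr{B}$-positive, so I would form the induced representation $T = {\rm{Ind}}_{\mathscr{B}_{\{e\}} \uparrow \mathscr{B}}(\pi)$ of $\mathscr{B}$ on $\mathcal{H} = \mathscr{L}_2(G; \mathscr{Y})$, where $\mathscr{Y}$ is the Hilbert bundle over $G$ with fibers $Y_x$ built from $B_x \otimes X$ as in Section 1. On the algebraic generators of $Y_x$ I would set
\begin{equation*}
\widetilde{\psi}(a)\kappa_x(b \otimes \xi) := \kappa_x(b \otimes \psi(a)\xi);
\end{equation*}
the commutation $\pi(c)\psi(a) = \psi(a)\pi(c)$ for $c \in B_e$ together with the explicit form of $(\cdot, \cdot)_x$ shows that this assignment preserves the defining inner product, so it extends to a bounded operator on each $Y_x$ and assembles fiberwise into a non-degenerate $\ast$-representation $\widetilde{\psi}$ of $A$ on $\mathcal{H}$. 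A direct computation gives $\tau_b \psi(a) = \psi(a) \tau_b$ on every fiber, hence $T_b \widetilde{\psi}(a) = \widetilde{\psi}(a) T_b$ for every $b \in B$ and $a \in A$. Consequently the rule $\Phi(b \otimes a) := T_b \widetilde{\psi}(a)$ extends to a non-degenerate $\ast$-representation of the algebraic tensor product $C^{\ast}(\mathscr{B}^d) \otimes A$, and the argument of Lemma \ref{vliedkdsasd} then yields the pre-$C^{\ast}$-seminorm $\sigma'(c) := \|\Phi(c)\|$ on $\{B_x \otimes A\}_{x \in G}$.

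It remains to check $\sigma'|(B_e \otimes A) \geq \sigma$. Because $\pi$ is non-degenerate, the prescription $\kappa_e(b \otimes \xi) \mapsto \rho(e)^{1/2}\pi(b)\xi$ extends to an isometric isomorphism $U : Y_e \to X$, and under this identification $U \tau_b U^{-1} = \pi(b)$ and $U \widetilde{\psi}(a) U^{-1} = \psi(a)$ for $b \in B_e$ and $a \in A$, so $U \Phi(c) U^{-1} = \phi(c)$ for every $c \in B_e \otimes A$. To transfer this fiberwise norm to $\mathcal{H}$ I would, using local triviality of $\mathscr{Y}$ near $e$, produce a sequence of unit sections $f_n \in \mathcal{H}$ supported in a shrinking fundamental system of open neighborhoods of $e$ and essentially constantly equal to a prescribed unit vector of $Y_e$; a standard estimate then yields $\|\Phi(c) f_n\| \to \|\phi(c)\| = \sigma(c)$, giving $\sigma'(c) \geq \sigma(c)$. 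The main obstacle will be precisely this concentration step: it must be performed uniformly in arbitrary finite sums $c = \sum_i b_i \otimes a_i$ rather than just simple tensors, and the rho-function normalization must be tracked with enough care that the identification $Y_e \cong X$ really recovers $\phi$ with the correct constant.
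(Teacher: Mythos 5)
Your construction is essentially the one the paper uses --- induce the $B_e$-part of a faithful commuting pair $(\pi,\psi)$ representing $B_e\otimes_\sigma A$, extend $\psi$ fiberwise to the induced Hilbert bundle by $a\mapsto\bigl(\kappa_x(b\otimes\xi)\mapsto\kappa_x(b\otimes\psi(a)\xi)\bigr)$, check that this commutes with the induced representation, and set $\sigma'(c)=\|\Phi(c)\|$ --- but you diverge at one structural point. The paper induces over $G^d$, the group $G$ retopologized discretely (note the bundle in the statement is the algebraic bundle $\mathscr{B}^d\otimes A$, so no topology is needed at this stage; the bundle topology is reinstated afterwards via \cite[II.13.18]{MR936628}). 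Over $G^d$ the induced space is the Hilbert direct sum $\bigoplus_{x\in G}Y_x$, so $Y_e$ is an honest closed subspace invariant under all $T_b$ ($b\in B_e$) and all $R^0_a$; the unitary $F:\kappa_e(b\otimes\xi)\mapsto S_b\xi$ identifies the compression to $Y_e$ with $\phi$, and the inequality $\sigma(c)=\|\phi(c)\|\le\|\Phi(c)\|=\sigma'(c)$ is then the trivial remark that restricting to an invariant subspace cannot increase the norm. You instead induce over the topological $G$, where (for $G$ non-discrete) the fiber $Y_e$ has measure zero in $\mathscr{L}_2(G;\mathscr{Y})$, which is exactly why you are forced into the concentration argument you flag as the main obstacle. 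That step does go through --- $\Phi(c)$ for $c\in B_e\otimes A$ acts fiberwise, $x\mapsto\|\Phi(c)g(x)\|$ is continuous for continuous sections $g$ once you verify that $\widetilde\psi(a)$ is a continuous bundle map (an extra check the discrete version never needs), and the limit is taken for the single decomposable operator $\Phi(c)$, so the uniformity over finite sums you worry about is not actually an issue --- but it costs you several pages of routine verification that the paper's discretization trick eliminates entirely. In short: same representation-theoretic idea, but the paper's choice of $G^d$ converts your hardest step into a one-line invariant-subspace observation.
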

\begin{proof}
Let $\langle S, R \rangle$ be faithful $\ast$-representation of $B_e \otimes_{\sigma} A$. We define 
\begin{equation}
T=\rm{Ind}_{B_e \uparrow \mathscr{B}^d}(S).
\end{equation} 
Let $\mathscr{Y}^d$ be the Hilbert bundle over $G^d$ which is induced by $S$, and let $R^0_a$ be an operator defined on $\mathscr{L}_2(G^d; \mathscr{Y}^d)$ by 
\begin{equation}
R^0_a(\kappa_x(b \otimes \xi))=\kappa_x(b \otimes R_a(\xi)) \ \ \ \ (x \in G, b \in B_x, \xi \in X(\langle S, R \rangle)).
\end{equation}
Then $R_a^0$ is in the commuting algebra of $T$. Furthermore, let $F: Y_e \to X(S)$ be the unitary operator defined by
\begin{equation}
F(\kappa_e(b \otimes \xi))=S_b\xi \ \ \ \ (b \in B_e, \xi \in X(S)),
\end{equation} 
we have
\begin{equation}\begin{split}
FR^0_a(\kappa_e (b \otimes \xi))&=F(\kappa_e(b \otimes R_a(\xi)))
\\&=S_b(R_a(\xi))
\\&=R_a S_b(\xi)
\\&=R_aF(\kappa_e(b \otimes \xi))
\end{split}\end{equation}
$(b \in B_e, \xi \in X(\langle S, R \rangle))$, hence
\begin{equation}
R=F^{\ast}(^{Y_e}R^0)F.
\end{equation}
On the other hand, by \cite[XI.14.21]{MR936629}
\begin{equation}
F^{\ast}(^{Y_e}(T|B_e))F=S,
\end{equation}
we conclude that if $\sigma'$ is defined by
\begin{equation}
\sigma'(\sum_{i=1}^n b_i \otimes a_i)=\|\sum_{i=1}^n T_{b_i} R^0_{a_i}\| \ \ \ \ (\sum_{i=1}^n b_i \otimes a_i \in B_x \otimes A, x \in G),
\end{equation}
then $\sigma \leq \sigma'| (B_e \otimes A)$. Finally, by Lemma \ref{cnmbjhkdfuuks} $\sigma'$ is a pre-$C^{\ast}$-seminorm on $\{B_x \otimes A: x \in G\}$.
\end{proof}

Now we have the following important proposition:
\begin{proposition}
 The maximal and minimal norms of $B_e \otimes A$ can be extended to unique pre-$C^{\ast}$-seminorms of $\mathscr{B}^d \otimes A$. We denote the corresponding $C^{\ast}$-algebraic bundles over $G$ by $\mathscr{B} \otimes_{max}A$ and $\mathscr{B} \otimes_{min}A$.
\end{proposition}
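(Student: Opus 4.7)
The plan is to separate uniqueness from existence. Uniqueness is free: by the observation noted just after the definition of $\mathscr{B} \otimes_{r} A$, any pre-$C^{\ast}$-seminorm on $\mathscr{B}^d \otimes A$ is completely determined by its restriction to $B_{e} \otimes A$. So it suffices to exhibit pre-$C^{\ast}$-seminorms on $\mathscr{B}^d \otimes A$ whose restrictions to $B_{e} \otimes A$ coincide with $\|\cdot\|_{\max}$ and $\|\cdot\|_{\min}$ respectively.

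For the maximal norm I would feed $\sigma = \|\cdot\|_{\max}$ into Lemma \ref{cnmbjhkdfuuks}, obtaining a pre-$C^{\ast}$-seminorm $\sigma'$ on $\mathscr{B}^d \otimes A$ with $\|\cdot\|_{\max} \leq \sigma'|(B_{e} \otimes A)$. Since $B_{e} \otimes A$ is a $\ast$-algebra and a pre-$C^{\ast}$-seminorm restricts to a $C^{\ast}$-seminorm there, the defining maximality of $\|\cdot\|_{\max}$ among $C^{\ast}$-seminorms on $B_{e} \otimes A$ yields the reverse inequality, hence equality. This gives $\mathscr{B} \otimes_{\max} A$.

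For the minimal norm Lemma \ref{cnmbjhkdfuuks} only supplies a majorant, so I would build the extension directly via an outer tensor product of representations. Pick a faithful non-degenerate $\ast$-representation $\pi$ of $B_{e}$ and set $T = \mathrm{Ind}_{B_{e} \uparrow \mathscr{B}}(\pi)$; since $\{e\}$ is a normal closed subgroup of $G$, \cite[XI.14.21]{MR936629} gives $\pi \leq T|B_{e}$, so $T|B_{e}$ is also faithful. Pick a faithful non-degenerate $\ast$-representation $\pi_{A}$ of $A$ on a Hilbert space $K$, and define $U_{b \otimes a} = T_{b} \otimes \pi_{A}(a)$ on $X(T) \otimes K$. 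A routine check against the bundle multiplication $(b \otimes a)(b' \otimes a') = bb' \otimes aa'$ and involution $(b \otimes a)^{\ast} = b^{\ast} \otimes a^{\ast}$ shows that $U$ is a non-degenerate $\ast$-representation of $\mathscr{B}^d \otimes A$, and by Lemma \ref{vliedkdsasd} it produces a pre-$C^{\ast}$-seminorm $\tau$ on $\mathscr{B}^d \otimes A$. The restriction of $\tau$ to $B_{e} \otimes A$ is the operator norm of the spatial tensor $(T|B_{e}) \otimes \pi_{A}$; since both factors are faithful, this equals $\|\cdot\|_{\min}$, giving $\mathscr{B} \otimes_{\min} A$.

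The main obstacle is the minimal case. In the maximal case the extremal property of $\|\cdot\|_{\max}$ automatically upgrades the inequality produced by Lemma \ref{cnmbjhkdfuuks} to an equality, but the minimal norm possesses no such self-correcting property, so a concrete representation-theoretic realization is needed. The outer tensor product construction above is tailored precisely to this: arranging $T|B_{e}$ to be faithful forces the restriction to $B_{e} \otimes A$ to be a spatial tensor product of two faithful representations, which realizes $\|\cdot\|_{\min}$ by definition.
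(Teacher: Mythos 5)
Your proof is correct, and it diverges from the paper's in a genuine way only in the minimal case. The uniqueness step and the maximal case run along the paper's own lines: the paper likewise reduces uniqueness to the remark that a pre-$C^{\ast}$-seminorm is determined by its restriction to $B_e \otimes A$ (since $r(c)^2=r(c^{\ast}c)$ and $c^{\ast}c \in B_e \otimes A$), and it also obtains the maximal extension from Lemma \ref{cnmbjhkdfuuks}; where you upgrade the inequality to an equality by the maximality of $\|\cdot\|_{\max}$ among $C^{\ast}$-seminorms on $B_e \otimes A$, the paper instead cites \cite[XI.11.3]{MR936629} --- both closures are legitimate, since $\sigma'|(B_e\otimes A)$ is the norm of a commuting pair of representations of $B_e$ and $A$. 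The real difference is $\otimes_{\min}$: the paper reuses the very same $\sigma'$ of Lemma \ref{cnmbjhkdfuuks}, now built from a faithful representation of $B_e\otimes_{\min}A$, and asserts that its restriction to $B_e\otimes A$ equals the minimal norm; making that ``easy to see'' precise requires choosing the faithful representation of $B_e\otimes_{\min}A$ in spatial form and identifying the induced fibers so that the commuting pair $(T|B_e,R^0)$ becomes a spatial tensor product. Your outer-tensor-product representation $U_{b\otimes a}=T_b\otimes\pi_A(a)$ sidesteps that verification, because its restriction to $B_e\otimes A$ is by construction a tensor product of two faithful representations, hence computes $\|\cdot\|_{\min}$ on the nose; what you owe are exactly the two routine checks you name, namely that $U$ is a $\ast$-representation of $\mathscr{B}^d\otimes A$ (so that Lemma \ref{vliedkdsasd} applies, e.g.\ by viewing $U$ as the fiberwise restriction of the product representation of $C^{\ast}(\mathscr{B}^d)\otimes A$ coming from the integrated form of $T$ and $\pi_A$), and that $T|B_e$ is faithful, for which your appeal to \cite[XI.14.21]{MR936629} with $H=\{e\}$ is the same use of that result that the paper itself makes inside Lemma \ref{cnmbjhkdfuuks}. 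Net effect: same skeleton for uniqueness and $\otimes_{\max}$, but a more self-contained and transparent argument for $\otimes_{\min}$ than the paper's one-line claim.
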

\begin{proof}
If $\sigma$ is the minimal norm of $B_e \otimes A$, it is easy to see that the $\sigma'$ constructed in the proof of Lemma \ref{cnmbjhkdfuuks} satisfies $\sigma'|(B_e \otimes A)=\sigma$. Furthermore, by \cite[XI.11.3]{MR936629} if $\sigma$ is the maximal tensor norm of $B_e \otimes A$, then $\sigma'|(B_e \otimes A)=\sigma$. 
\end{proof}

The following lemma is readily proved according to definitions:
\begin{lemma}\label{cnuuod}
The map
\begin{equation}
\Psi_{\sigma}: \sum_{i=1}^n b_i \otimes a_i \mapsto \sum_{i=1}^n b_i \otimes a_i \ \ \ \ (\sum_{i=1}^n b_i \otimes a_i \in B_x \otimes A, x \in G)
\end{equation}
 can be extended to a continuous map from $\mathscr{B} \otimes_{\rm{max}} A$ onto $\mathscr{B} \otimes_{r} A$. Therefore, if $T$ is a $\ast$-representation of $\mathscr{B} \otimes_{r} A$, then $T \circ \Psi$ is a $\ast$-representation of $\mathscr{B} \otimes_{\rm{max}} A$.
\end{lemma}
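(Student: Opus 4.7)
The plan is to establish the fiberwise inequality $r \leq \sigma_{\max}$ on each $B_x \otimes A$, then extend the algebraic identity map to a continuous bundle morphism, and finally derive the representation statement by functoriality.

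On $B_e \otimes A$, the restriction $r|(B_e \otimes A)$ is a $C^{\ast}$-seminorm, hence dominated by the maximal $C^{\ast}$-norm by the defining universal property of $\otimes_{\max}$. For $x \ne e$ and $c \in B_x \otimes A$, one has $c^{\ast} c \in B_{x^{-1}} B_x \otimes A \subset B_e \otimes A$, so applying the $C^{\ast}$-identity to both $r$ and $\sigma_{\max}$ yields
\begin{equation*}
r(c)^2 = r(c^{\ast}c) \leq \sigma_{\max}(c^{\ast}c) = \sigma_{\max}(c)^2,
\end{equation*}
giving $r \leq \sigma_{\max}$ on every fiber $B_x \otimes A$.

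This inequality lets us extend the identity map on $B_x \otimes A$ to a norm-decreasing linear map on completions $\Psi_{\sigma,x}: B_x \otimes_{\max} A \to B_x \otimes_{r} A$ with dense image. To upgrade this family of fiberwise contractions to a continuous bundle map on the total spaces, one invokes \cite[II.13.18]{MR936628} as was already used to construct $\mathscr{B} \otimes_r A$ itself: both $\mathscr{B} \otimes_{\max} A$ and $\mathscr{B} \otimes_r A$ are built so that every element of $\Gamma$ is a continuous cross-section, and $\Psi_\sigma$ sends each such section of the max bundle to the corresponding section of the $r$-bundle; this compatibility together with the fiberwise contractivity yields continuity of $\Psi_\sigma$ on the total spaces, while density of $\Gamma$ in each fiber yields the ``onto'' claim.

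The second assertion then follows because $\Psi_\sigma$ is fiberwise $\ast$-preserving and multiplicative on the dense $\ast$-subalgebra $B_x \otimes A$ and extends continuously, so composition with any non-degenerate $\ast$-representation $T$ of $\mathscr{B} \otimes_r A$ produces a bundle $\ast$-representation of $\mathscr{B} \otimes_{\max} A$. The only point demanding a little care is the bundle-topological continuity of $\Psi_\sigma$ as opposed to the mere fiberwise contractivity, but as indicated this is routine once one notes that $\Gamma$ is simultaneously a set of continuous sections in both bundles.
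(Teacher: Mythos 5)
Your main line of argument is correct and is essentially the routine verification the paper omits (it offers no proof, calling the lemma "readily proved according to definitions"): since $r$ and the maximal seminorm are pre-$C^{\ast}$-seminorms on the bundle, the $C^{\ast}$-identity $\sigma(c)^2=\sigma(c^{\ast}c)$ with $c^{\ast}c\in B_e\otimes A$ reduces the comparison to the unit fibre, where $r|(B_e\otimes A)\leq\|\cdot\|_{\max}$ by universality and $\sigma_{\max}|(B_e\otimes A)$ \emph{is} the maximal norm by the paper's extension proposition; this gives $r\leq\sigma_{\max}$ on every $B_x\otimes A$, hence well-defined fibrewise contractions between the completions. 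Your continuity argument --- approximate an arbitrary convergent net by sections in $\Gamma$, which are continuous sections of both bundles, and apply the Banach-bundle convergence criterion --- is exactly the pattern the paper itself uses in Lemma \ref{xchjjiooifdsd}, and the final assertion about $T\circ\Psi_{\sigma}$ follows as you say, since linearity, multiplicativity and $\ast$-preservation pass from the algebraic tensor products to the completed fibres by continuity.

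The one step that does not stand as written is the justification of ``onto'': density of the image of the algebraic tensor products in each fibre of $\mathscr{B}\otimes_{r}A$ does not make $\Psi_{\sigma}$ surjective, because a bounded linear map with dense range need not be onto and you only have contractivity, not openness. Surjectivity is nonetheless true fibrewise, but it needs an extra (standard) input: view $B_x\otimes_{\max}A$ as a right Hilbert $B_e\otimes_{\max}A$-module via $\langle c,d\rangle=c^{\ast}d$, let $I$ be the kernel of the quotient $\ast$-homomorphism $B_e\otimes_{\max}A\to B_e\otimes_{r}A$, and use the fact that on the quotient module $(B_x\otimes_{\max}A)/\overline{(B_x\otimes_{\max}A)I}$ the Banach-quotient norm coincides with the norm coming from the induced $(B_e\otimes_{r}A)$-valued inner product; since that inner-product norm is exactly the $r$-seminorm on the dense algebraic part, the quotient module is isometrically $B_x\otimes_{r}A$ and $\Psi_{\sigma,x}$, being the composition with the (surjective) quotient map, is onto. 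Alternatively, you may simply weaken ``onto'' to ``with dense range'': that is all the second assertion and the later applications (e.g.\ Proposition \ref{cnhjousd}) actually use.
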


\begin{lemma}\label{cxmhhj}
For each $b \in B_x$, $b_i \in B_y$ and $a_i \in A$ ($x, y \in G; i=1,...n$), we have
\begin{equation}
\|\sum_{i=1}^n (b_ib)\otimes a_i\|_{\mathscr{B} \otimes_{\rm{max}}A}, \|\sum_{i=1}^n (bb_i)\otimes a_i\|_{\mathscr{B} \otimes_{\rm{max}}A} \leq \|b\| \|\sum_{i=1}^n b_i\otimes a_i\|_{\mathscr{B} \otimes_{\rm{max}}A}
\end{equation}
and
\begin{equation}
\|\sum_{i=1}^n (b_i)\otimes (a_ia)\|_{\mathscr{B} \otimes_{\rm{max}}A}, \|\sum_{i=1}^n (b_i)\otimes (aa_i)\|_{\mathscr{B} \otimes_{\rm{max}}A} \leq \|a\| \|\sum_{i=1}^n b_i\otimes a_i\|_{\mathscr{B} \otimes_{\rm{max}}A}
\end{equation}
\end{lemma}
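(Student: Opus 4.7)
The plan is to reuse the concrete realization of the maximal norm constructed in the proof of Lemma \ref{cnmbjhkdfuuks}. Fix a faithful $*$-representation $\langle S,R\rangle$ of $B_e \otimes_{\max} A$, form $T=\mathrm{Ind}_{B_e \uparrow \mathscr{B}^d}(S)$ on $\mathscr{L}_2(G^d;\mathscr{Y}^d)$, and let $R^0_a$ be the bounded operator defined there. By the proof of the preceding proposition, the $*$-seminorm $\sigma'$ defined by $\sigma'(\sum_i b_i\otimes a_i)=\|\sum_i T_{b_i}R^0_{a_i}\|$ restricts to the maximal tensor norm on $B_e\otimes A$, and by the uniqueness remark following the definition of $\mathscr{B}\otimes_r A$, $\sigma'$ realizes the fiberwise norm of $\mathscr{B}\otimes_{\max}A$ on every fiber $B_x\otimes A$.

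Two observations drive the argument. First, $R^0_a$ lies in the commutant of $T$, as recorded during the construction in Lemma \ref{cnmbjhkdfuuks}; hence $T_b R^0_a = R^0_a T_b$ for every $b\in B$ and every $a\in A$. Second, $\|T_b\|\le \|b\|$ since $T$ is a $*$-representation of the $C^{\ast}$-algebraic bundle $\mathscr{B}$, and $\|R^0_a\|\le\|R_a\|\le\|a\|$ by a fiberwise inspection of the formula defining $R^0_a$, using that $R$ is a $*$-representation of $A$.

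Each of the four inequalities is then obtained by factoring a $T_b$ or an $R^0_a$ out of the sum $\sum_i T_{b_i}R^0_{a_i}$ and invoking submultiplicativity of the operator norm. For the $b$-sided estimates, one writes
\begin{equation*}
\sum_{i=1}^n T_{bb_i}R^0_{a_i}=T_b\Bigl(\sum_{i=1}^n T_{b_i}R^0_{a_i}\Bigr),\qquad
\sum_{i=1}^n T_{b_ib}R^0_{a_i}=\sum_{i=1}^n T_{b_i}T_b R^0_{a_i}=\Bigl(\sum_{i=1}^n T_{b_i}R^0_{a_i}\Bigr)T_b,
\end{equation*}
where the second identity uses the commutation $T_b R^0_{a_i}=R^0_{a_i}T_b$. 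For the $a$-sided estimates, one similarly uses that $R^0$ is a $*$-representation of $A$ (so $R^0_{a_i a}=R^0_{a_i}R^0_a$ and $R^0_{aa_i}=R^0_a R^0_{a_i}$) and again the fact that $R^0_a$ commutes with every $T_{b_i}$ to pull $R^0_a$ out on either side.

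There is essentially no conceptual obstacle; the only point requiring care is making sure that the norm on each fiber $B_x\otimes A$ of $\mathscr{B}\otimes_{\max}A$ really is given by $\|\sum_i T_{b_i}R^0_{a_i}\|$ rather than by some other extension, which is precisely what was settled in the proof of the preceding proposition. Once that identification is granted, the lemma reduces to a three-line operator-norm calculation.
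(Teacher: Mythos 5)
Your proposal is correct and follows essentially the same route as the paper: realize the maximal fiber norms via the representation $T=\mathrm{Ind}_{B_e\uparrow\mathscr{B}^d}(S)$ and the commuting operators $R^0_a$ from Lemma \ref{cnmbjhkdfuuks}, then factor $T_b$ or $R^0_a$ out of $\sum_i T_{b_i}R^0_{a_i}$ and use submultiplicativity. If anything, you are more careful than the paper, which glosses over the commutation $T_bR^0_{a_i}=R^0_{a_i}T_b$ and writes an equality where only the inequality $\|\sum_i T_{b_ib}R^0_{a_i}\|\le\|T_b\|\,\|\sum_i T_{b_i}R^0_{a_i}\|$ is needed (and true).
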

\begin{proof}
We prove 
\begin{equation}
\|\sum_{i=1}^n (b_ib)\otimes a_i\|_{\mathscr{B} \otimes_{\rm{max}}A} \leq \|b\| \|\sum_{i=1}^n b_i\otimes a_i\|_{\mathscr{B} \otimes_{\rm{max}}A},
\end{equation}
the other parts may be proved by the same argument.

By the proof of Lemma \ref{cnmbjhkdfuuks}, we have $\ast$-representation $T$ of $\mathscr{B}^d$ and $\ast$-representation $S$ of $A$ such that
\begin{equation}
\|\sum_{i=1}^n b_i\otimes a\|_{\mathscr{B} \otimes_{\rm{max}}A}=\|\sum_{i=1}^n T_{b_i}S_{a_i}\| \ \ \ \ (\sum_{i=1}^n b_i \otimes a_i \in B_z \otimes A, z \in G).
\end{equation}
Thus we have
\begin{equation}\begin{split}
\|\sum_{i=1}^n(b_ib)\otimes a_i\|_{\mathscr{B} \otimes_{\rm{max}}A}&=\|\sum_{i=1}^nT_{b_ib}S_{a_i}\|
\\&=\|T_b\| \|\sum_{i=1}^nT_{b_i}S_{a_i}\|
\\&\leq \|b\| \|\sum_{i=1}^n b_i \otimes a_i\|_{\mathscr{B} \otimes_{\rm{max}}A}.
\end{split}\end{equation}
Our proof is done.
\end{proof}

By the previous lemma, for each $b \in B$ and $a \in A$, we can define
\begin{equation*}\begin{split}
&_bu:  \mathscr{B} \otimes_{\rm{max}}A  \to \mathscr{B} \otimes_{\rm{max}}A, \ \sum_{i=1}^n b_i \otimes a_i \mapsto \sum_{i=1}^n (bb_i) \otimes a_i
\\&u_b: \mathscr{B} \otimes_{\rm{max}}A  \to \mathscr{B} \otimes_{\rm{max}}A, \ \sum_{i=1}^n b_i \otimes a_i \mapsto \sum_{i=1}^n (b_ib) \otimes a_i;
\\& _av: \mathscr{B} \otimes_{\rm{max}}A  \to \mathscr{B} \otimes_{\rm{max}}A, \ \sum_{i=1}^n b_i \otimes a_i \mapsto \sum_{i=1}^n b_i \otimes (aa_i)
\\& v_a:  \mathscr{B} \otimes_{\rm{max}}A  \to \mathscr{B} \otimes_{\rm{max}}A, \ \sum_{i=1}^n b_i \otimes a_i \mapsto \sum_{i=1}^n b_i \otimes (a_ia).
\end{split}\end{equation*}

\begin{lemma}\label{xchjjiooifdsd}
$_bu$, $u_b$, $_av$ and $v_a$ are continuous. In particular, $\langle _bu, u_b \rangle$ and $\langle _av, v_a \rangle$ are multipliers of $\mathscr{B} \otimes_{\rm{max}}A$ of order $\pi(b)$ and $e$ respectively.
\end{lemma}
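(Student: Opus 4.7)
The plan is to invoke the standard Fell--Doran criterion for continuity of fibre-shifting linear maps between Banach bundles: such a map is continuous as soon as (i) its norm is uniformly bounded on each fibre and (ii) it sends some family of continuous cross-sections whose values are pointwise dense in every fibre to continuous cross-sections (after the appropriate translation on the base). The uniform fibrewise bounds are already furnished by Lemma \ref{cxmhhj}, which gives $\|{_b u}|_{B_y \otimes_{\max} A}\| \le \|b\|$, $\|u_b|_{B_y \otimes_{\max} A}\| \le \|b\|$, $\|{_a v}|_{B_y \otimes_{\max} A}\| \le \|a\|$ and $\|v_a|_{B_y \otimes_{\max} A}\| \le \|a\|$. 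So only the cross-section criterion has to be verified.

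For this, I would take the natural dense family $\Gamma$ introduced just before the lemma, namely the linear span of sections of the form $y \mapsto f(y) \otimes a'$ with $f \in \mathscr{L}(\mathscr{B})$ and $a' \in A$; by the construction of $\mathscr{B} \otimes_{\max} A$ (and the appeal to \cite[II.13.18]{MR936628}) these sections are continuous, and their values at any fixed $y$ span a norm-dense subspace of the fibre $B_y \otimes_{\max} A$. Applied fibrewise, the map $_b u$ sends the continuous section $y \mapsto f(y) \otimes a'$ to the section $y \mapsto (b \, f(y)) \otimes a'$, whose value lies in the fibre at $\pi(b)y$; after the shift on the base (which is what ``multiplier of order $\pi(b)$'' refers to), we look at $y \mapsto (b\, f(\pi(b)^{-1}y)) \otimes a'$. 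The section $y \mapsto b\, f(\pi(b)^{-1}y)$ is continuous in $\mathscr{B}$ because left multiplication by a fixed element $b \in B_{\pi(b)}$ sends continuous cross-sections of $\mathscr{B}$ to continuous cross-sections (see \cite[VIII.3.3]{MR936629} or the general Banach-bundle multiplier theory in \cite[II.14]{MR936628}), and tensoring with a fixed $a' \in A$ preserves membership in $\Gamma \subset \mathscr{L}(\mathscr{B} \otimes_{\max} A)$. The same verification, with right multiplication by $b$ in $\mathscr{B}$, handles $u_b$; the maps $_a v$ and $v_a$ are even easier, since they act trivially on the base and the sections $y \mapsto f(y) \otimes (a a')$ and $y \mapsto f(y) \otimes (a' a)$ obviously belong to $\Gamma$.

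Once continuity is established, to conclude that $\langle {_b u}, u_b \rangle$ is a multiplier of order $\pi(b)$ of $\mathscr{B} \otimes_{\max} A$ one has to check the two algebraic compatibility conditions of \cite[VIII.2.12]{MR936629}, namely the fibrewise linearity (already built in), the correct shift of fibres (clear from $\pi(b b_i) = \pi(b) \pi(b_i) = \pi(b) y$ and $\pi(b_i b) = y \pi(b)$), and the associativity identity $({_b u}(\xi))\, \eta = \xi\, (u_b(\eta))$ for $\xi, \eta$ in arbitrary fibres of $\mathscr{B} \otimes_{\max} A$. This last identity is immediate on the algebraic generators $\sum b_i \otimes a_i$ and $\sum b'_j \otimes a'_j$ from the associativity of multiplication in $\mathscr{B}$ and in $A$ applied to $\sum_{i,j} (b b_i)(b'_j) \otimes a_i a'_j = \sum_{i,j} b_i (b b'_j) \otimes a_i a'_j$, and extends by continuity thanks to the bounds of Lemma \ref{cxmhhj}. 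The analogous identity for $\langle {_a v}, v_a \rangle$ is the same computation in the $A$-factor, and the order is $e$ because the base index is untouched.

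The only real obstacle is the cross-section verification for $_b u$ and $u_b$: it hinges on knowing that left (resp.\ right) multiplication by a fixed $b \in B_x$ sends continuous cross-sections of $\mathscr{B}$ into continuous cross-sections after translation on $G$, which is precisely the fact that each $b$ determines a multiplier of $\mathscr{B}$ of order $\pi(b)$ in the sense of Fell--Doran; the rest of the argument is bookkeeping on the tensored bundle.
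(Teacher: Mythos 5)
Your argument is essentially the paper's own: the uniform fibrewise bounds of Lemma \ref{cxmhhj}, approximation by the dense family $\Gamma$ of continuous cross-sections, and the Banach-bundle convergence criterion \cite[III.13.12]{MR936628} are exactly the ingredients the paper uses to prove continuity of $u_b$ (and, by the same argument, of ${}_b u$, ${}_a v$, $v_a$); the ``standard Fell--Doran criterion for fibre-shifting maps'' you invoke is not quoted as a ready-made result in the paper, but its proof is precisely the paper's $\epsilon$-net argument, so you should either give a precise reference or write out that short argument. Your way of getting continuity of the image sections --- observing that $y \mapsto (f(y)b)\otimes a'$, after the base shift, again belongs to $\Gamma$ because multiplication by a fixed element of $B$ carries $\mathscr{L}(\mathscr{B})$ into itself --- is a mild streamlining of the paper's sub-step, which instead proves directly that $b_i \to b$ in $B$ implies $b_i\otimes a \to b\otimes a$ and then applies \cite[III.13.12]{MR936628} twice. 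One correction in the multiplier part (which the paper treats as immediate and does not verify): the compatibility identity for a multiplier of order $\pi(b)$ is $u_b(\xi)\,\eta=\xi\,({}_b u(\eta))$, i.e.\ $(\xi b)\eta=\xi(b\eta)$; as written, your identity $({}_b u(\xi))\,\eta=\xi\,(u_b(\eta))$ and the display $(bb_i)b'_j\otimes a_ia'_j=b_i(bb'_j)\otimes a_ia'_j$ have left and right swapped --- the correct version $(b_ib)b'_j\otimes a_ia'_j=b_i(bb'_j)\otimes a_ia'_j$ is indeed immediate from associativity, so this is a slip rather than a gap.
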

\begin{proof}
We prove the continuity of $u_b$, the continuity of the others can be proved by the same argument.

For any $f \in \Gamma$, it is easy to see that $x \mapsto u_bf(x)$ is continuous, for this is the consequence of the following: If $b_i \to b$ in $B$ then $b_i \otimes a \to b \otimes a$ in $\mathscr{B} \otimes_{\rm{max}}A$.  To prove this, let $g \in \mathscr{L}(\mathscr{B})$ such that $g(\pi(b))=b$, then $\|g(\pi(b_i))-b_i\| \to 0$ and so 
\begin{equation}
\|g(\pi(b_i))\otimes a-b_i \otimes a\| \leq \|g(\pi(b_i))-b_i\|\|a\| \to 0.
\end{equation}
On the other hand $g(\pi(b_i)) \otimes a \to g(\pi(b)) \otimes a=b \otimes a$, by \cite[III.13.12]{MR936628} we conclude that $b_i \otimes a \to b \otimes a$. Therefore we have proved the continuity of $x \mapsto u_bf(x)$.

Let $\{c_i\}_{i \in I} \subset B \otimes_{\rm{max}}A$ such that $c_i \to c$. For arbitrary $\epsilon >0$, it is easy to see that there is $f \in \Gamma$ with $\|f(\pi(c))-c\| < \epsilon$. Then $\|f(\pi(c_i))-c_i\| < \epsilon$ for large $i$. Thus by Lemma \ref{cxmhhj} we have
\begin{equation*}\begin{split}
\|u_bf(\pi(c_i))-u_bc_i\| &< \|b\| \epsilon.
\end{split}\end{equation*}
On the other hand, we have proved that $u_bf(\pi(c_i)) \to u_bf(\pi(c))$ and
\begin{equation*}
\|u_bf(\pi(c))-u_bc\| \leq \|b\| \|f(\pi(c))-c\| < \|b\| \epsilon,
\end{equation*}
by \cite[III.13.12]{MR936628} again we have $u_bc_i \to u_b c$, this proved the continuity of $u_b$.
\end{proof}

\begin{lemma}\label{cnuokudfskhfs}
$b \mapsto u_b$ and $a \mapsto v_a$ are strongly continuous.
\end{lemma}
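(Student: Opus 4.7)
The plan is to follow the pattern set in the proof of Lemma \ref{xchjjiooifdsd}: prove the claim first on an algebraic dense subset of each fiber, then bootstrap to arbitrary $c$ via the uniform operator bound of Lemma \ref{cxmhhj} and the bundle-continuity tool \cite[III.13.12]{MR936628}. I describe $b \mapsto u_b$; the case $a \mapsto v_a$ is strictly easier because $v_a$ preserves the fiber of $\mathscr{B}\otimes_{\mathrm{max}}A$ over each $y \in G$, so no change-of-fiber issue arises.

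Fix $c \in B_y \otimes_{\mathrm{max}} A$ and suppose $b_i \to b$ in $\mathscr{B}$. Upper semi-continuity of the norm gives an eventual bound $\|b_i\| \leq K$. Given $\epsilon > 0$, the algebraic tensor product is norm-dense in $B_y \otimes_{\mathrm{max}} A$, so I pick $c_0 = \sum_{k=1}^{n} b'_k \otimes a_k$ with $b'_k \in B_y$, $a_k \in A$, and $\|c - c_0\| < \epsilon$. Lemma \ref{cxmhhj} then yields $\|u_{b_i}c - u_{b_i}c_0\| \leq K\epsilon$ for large $i$ and $\|u_b c - u_b c_0\| \leq \|b\|\epsilon$, so it suffices to prove $u_{b_i}c_0 \to u_b c_0$. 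But $u_{b_i}c_0 = \sum_{k=1}^{n} (b_i b'_k) \otimes a_k$, and continuity of multiplication in $\mathscr{B}$ gives $b_i b'_k \to b b'_k$ for each $k$. The same device used in the proof of Lemma \ref{xchjjiooifdsd}, namely choosing $g \in \mathscr{L}(\mathscr{B})$ with $g(\pi(bb'_k)) = b b'_k$ and combining $\|g(\pi(b_i b'_k)) - b_i b'_k\| \to 0$ with continuity of the section $x \mapsto g(x) \otimes a_k$, forces $(b_i b'_k)\otimes a_k \to (bb'_k)\otimes a_k$ in $\mathscr{B}\otimes_{\mathrm{max}}A$. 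Fiberwise addition in the Banach bundle then sums these finitely many convergences to give $u_{b_i}c_0 \to u_b c_0$, as desired.

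The one point to watch carefully is that $u_{b_i}c_0$ lies in the fiber over $\pi(b_i)y$, which varies with $i$, so all convergences must be interpreted in the bundle topology of $\mathscr{B}\otimes_{\mathrm{max}}A$ rather than in a single normed space; this is precisely the subtlety already handled via \cite[III.13.12]{MR936628} in the preceding lemma, where one shows that an element is a bundle-limit by producing nearby section values on both sides. Apart from this bookkeeping, the proof is essentially a transcription of the earlier pattern, so the main obstacle is presentational rather than conceptual. The argument for $a \mapsto v_a$ runs identically, with $b_i b'_k$ replaced by $a_i a'_k$ (continuity of multiplication in $A$) and with all work taking place inside the single fiber $B_y \otimes_{\mathrm{max}} A$, which removes the fiber-shift complication altogether.
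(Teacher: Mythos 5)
Your proposal is correct and follows essentially the same route as the paper's own proof: approximate $c$ by an element of the algebraic tensor product, control the error uniformly via Lemma \ref{cxmhhj}, get convergence on elementary tensors from continuity of multiplication together with the section device already used in Lemma \ref{xchjjiooifdsd}, and conclude with \cite[III.13.12]{MR936628}. (One immaterial slip: with the paper's definition $u_b$ multiplies on the right, so $u_{b_i}c_0=\sum_{k}(b'_k b_i)\otimes a_k$ rather than $\sum_{k}(b_i b'_k)\otimes a_k$.)
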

\begin{proof}
Let $\{b_i\}$ be a net of $B$ converging to $b \in B$. By Lemma \ref{xchjjiooifdsd}, we have
\begin{equation*}
u_{b_i}(\sum_{k=1}^n b_k \otimes a_k) \to \sum_{k=1}^n bb_k \otimes a_k \ \ \ \ (b_k \in B_x, x \in G; a_k \in A).
\end{equation*}
Now for any $c \in B \otimes_{\rm{max}} A$ with $\pi(b)=B_x$, for arbitrary $\epsilon >0$ there are $b_k \in B_x$ and $a_k \in A$ such that 
\begin{equation*}
\|\sum_{i=1}^n b_k \otimes a_k -c\| < \epsilon.
\end{equation*}
Thus by  Lemma \ref{cxmhhj} we have
\begin{equation*}
\|u_{b_i}(\sum_{i=1}^n b_k \otimes a_k-c)\|< \|b_i\| \cdot \epsilon.
\end{equation*}
On the other hand, 
\begin{equation*}
u_{b_i}(\sum_{k=1}^n b_k \otimes a_k) \to u_b(\sum_{k=1}^n b_k \otimes a_k),
\end{equation*}
 by  \cite[III.13.12]{MR936628} we have $u_{b_i}c \to u_bc$.
\end{proof}

By Lemma \ref{cnuuod}, Lemma \ref{xchjjiooifdsd}, Lemma \ref{cnuokudfskhfs} and \cite[VIII.15.3]{MR936629} we conclude the following proposition:
\begin{proposition}\label{cnhjousd}
For any non-degenerate $\ast$-representation $T$ of $\mathscr{B} \otimes_{r} A$, there are $\ast$-representations of $\mathscr{B}$ and $A$, say $S$ and $R$, such that $range(S)$ is in the commuting algebra of $R$ and
\begin{equation*}
T(\sum_{i=1}^n b_i \otimes a_i)=\sum_{i=1}^n S_{b_i}R_{a_i} \ \ \ \ (\sum_{i=1}^n b_i \otimes a_i \in B_x \otimes A, x \in G).
\end{equation*}
\end{proposition}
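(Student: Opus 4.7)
The strategy is signposted by the remarks preceding the statement. First, by Lemma \ref{cnuuod} the composition $T\circ\Psi$ is a non-degenerate $\ast$-representation of $\mathscr{B}\otimes_{\rm{max}}A$, and $\Psi$ is the identity on elementary tensors $b\otimes a$. Hence any decomposition of the required form for $T\circ\Psi$ automatically descends to $T$, so I may assume $r=\rm{max}$ throughout.

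Next I would invoke the Fell--Doran extension theorem \cite[VIII.15.3]{MR936629}, which extends any non-degenerate $\ast$-representation of a $C^{\ast}$-algebraic bundle $\mathscr{C}$ canonically to a $\ast$-representation of the bundle of strongly continuous multipliers of $\mathscr{C}$, acting on the same Hilbert space. Lemma \ref{xchjjiooifdsd} provides two families of multipliers of $\mathscr{B}\otimes_{\rm{max}}A$: the pair $\langle {}_bu,u_b\rangle$ of order $\pi(b)$ for $b\in B$, and $\langle {}_av,v_a\rangle$ of order $e$ for $a\in A$. Lemma \ref{cnuokudfskhfs} shows that both families depend strongly continuously on their parameters. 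Applying the extension of $T$ to these two families produces operators $S_b$ and $R_a$ on $X(T)$.

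To finish, three properties must be verified. First, $b\mapsto S_b$ and $a\mapsto R_a$ are $\ast$-representations of $\mathscr{B}$ and $A$; this reduces to the immediate calculation that $b\mapsto\langle {}_bu,u_b\rangle$ and $a\mapsto\langle {}_av,v_a\rangle$ are $\ast$-homomorphisms into the multiplier bundle, as one reads off elementary tensors. Second, $S_b$ commutes with $R_a$ because the two families of multipliers commute in the multiplier algebra -- ${}_bu$ alters only the $\mathscr{B}$-factor of an elementary tensor while ${}_av$ only the $A$-factor -- and the extension is homomorphic. Third, on any $c\otimes a'$ one has $({}_bu\circ{}_av)(c\otimes a')=(bc)\otimes(aa')=(b\otimes a)(c\otimes a')$, which identifies the composed multiplier with left multiplication by $b\otimes a$.

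I expect the last identity $T(b\otimes a)=S_bR_a$ to be the main obstacle, because $b\otimes a$ is not itself a multiplier of $\mathscr{B}\otimes_{\rm{max}}A$ and the identity must be extracted indirectly from the action on products. Using non-degeneracy of $T$, write an arbitrary vector as $\xi=\lim_i T(f_i)\eta_i$ with $f_i\in\Gamma$; then by definition of the multiplier extension, $S_bR_aT(f_i)\eta_i=T({}_bu\,{}_av(f_i))\eta_i=T((b\otimes a)f_i)\eta_i=T(b\otimes a)T(f_i)\eta_i$, and passing to the limit gives $S_bR_a\xi=T(b\otimes a)\xi$ on all of $X(T)$. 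The factorization for $\sum_{i=1}^n b_i\otimes a_i$ in the statement then follows by linearity.
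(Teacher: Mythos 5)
Your proposal follows essentially the same route as the paper: the paper derives Proposition \ref{cnhjousd} precisely by combining Lemma \ref{cnuuod} (reduction to $\mathscr{B}\otimes_{\mathrm{max}}A$), Lemma \ref{xchjjiooifdsd} and Lemma \ref{cnuokudfskhfs} (the multipliers $\langle {}_bu,u_b\rangle$, $\langle {}_av,v_a\rangle$ and their strong continuity) with the Fell--Doran multiplier extension theorem \cite[VIII.15.3]{MR936629}, which is exactly your argument. Your write-up merely spells out the verifications (commutation, the identity $T(b\otimes a)=S_bR_a$ via non-degeneracy) that the paper leaves implicit, and these details are correct.
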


\begin{proposition}\label{vnjilfrf}
Every non-degenerate $\ast$-representation of $C^{\ast}(\mathscr{B}) \otimes_{max} A$ is the integrated form of a unique non-degenerate $\ast$-representation of $\mathscr{B} \otimes_{max} A$.
\end{proposition}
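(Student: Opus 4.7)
The plan is to convert a non-degenerate $\ast$-rep $\Pi$ of $C^{\ast}(\mathscr{B}) \otimes_{\max} A$ into a bundle $\ast$-rep $T$ of $\mathscr{B} \otimes_{\max} A$ whose integrated form recovers $\Pi$ on the common dense $\ast$-subalgebra $\Gamma$. First I would invoke the universal property of the maximal $C^{\ast}$-tensor product to decompose $\Pi$ as $\Pi(f \otimes a) = \Phi(f)\psi(a)$, where $\Phi$ and $\psi$ are commuting non-degenerate $\ast$-reps of $C^{\ast}(\mathscr{B})$ and $A$ on a common Hilbert space $\mathcal{H}$. Applying the Fell--Doran disintegration theorem \cite[VIII.13]{MR936629}, I would write $\Phi = \widetilde{S}$ for a unique non-degenerate $\ast$-rep $S$ of $\mathscr{B}$. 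Since $\Phi(C^{\ast}(\mathscr{B}))'' = S(\mathscr{B})''$ (a standard consequence of disintegration via the strong continuity of $S$ and density of compactly supported sections concentrated at any given point), the commutation $\psi(A) \subset \Phi(C^{\ast}(\mathscr{B}))'$ upgrades to the pointwise commutation $[S_b, \psi(a)] = 0$ for every $b \in \mathscr{B}$ and $a \in A$.

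With the commutation in hand, I would define $T: \mathscr{B} \otimes_{\max} A \to \mathcal{B}(\mathcal{H})$ fiberwise by $T(\sum_i b_i \otimes a_i) := \sum_i S_{b_i}\psi(a_i)$ for $c = \sum_i b_i \otimes a_i \in B_x \otimes A$. Multiplicativity and $\ast$-preservation across fibers are then routine from the commutation. The critical analytic step is the fiberwise norm bound $\|T(c)\| \leq \sigma'(c)$, where $\sigma'$ denotes the pre-$C^{\ast}$-seminorm on $\mathscr{B}^d \otimes A$ defining $\mathscr{B} \otimes_{\max} A$. For $c \in B_e \otimes A$ this is precisely the universal property of the maximal tensor norm $\sigma$ applied to the commuting pair $(S|B_e, \psi)$. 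For general $c \in B_x \otimes A$, I would reduce to the $e$-fiber via the $C^{\ast}$-identity: $\|T(c)\|^2 = \|T(c^{\ast}c)\|$ and $\sigma'(c)^2 = \sigma'(c^{\ast}c) = \sigma(c^{\ast}c)$, noting $c^{\ast}c \in B_e \otimes A$. Continuity of $T$ on the total space then follows the template of Lemmas \ref{xchjjiooifdsd}--\ref{cnuokudfskhfs}: verify continuity on the dense family $\Gamma$ using strong continuity of $S$, and extend via uniform fiberwise boundedness of $T$ together with \cite[III.13.12]{MR936628}. Non-degeneracy is automatic since $T(\mathscr{B} \otimes A)\mathcal{H} \supset S(\mathscr{B})\overline{\psi(A)\mathcal{H}} = S(\mathscr{B})\mathcal{H}$ is dense.

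Finally I would verify $\widetilde{T} = \Pi$ on the common dense subalgebra $\Gamma$: for $g \in \mathscr{L}(\mathscr{B})$ and $a \in A$, a direct calculation gives $\widetilde{T}(g \otimes a) = \int_G S_{g(x)} \psi(a)\,dx = \Phi(g)\psi(a) = \Pi(g \otimes a)$, and the equality extends by continuity. Uniqueness of $T$ is then a direct application of the uniqueness clause in Fell--Doran's disintegration theorem applied to $\widetilde{T}$. The step I expect to need most care is lifting the commutation from the integrated form $\Phi(f)$ to each individual fiber operator $S_b$; this is where the density and continuity arguments must be articulated carefully, and it constitutes the genuine point of difference from the purely $C^{\ast}$-algebraic tensor-product setting.
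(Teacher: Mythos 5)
Your proposal is correct and follows essentially the same route as the paper: decompose the representation into a commuting pair $(\widetilde{S},R)$ via the universal property of $\otimes_{\max}$, disintegrate $\widetilde{S}$ to a bundle representation $S$ of $\mathscr{B}$, define $T$ fiberwise by $\sum_i S_{b_i}R_{a_i}$, bound it by the maximal pre-$C^{\ast}$-seminorm, and use strong continuity along sections in $\Gamma$ to recognize $T$ as a representation of $\mathscr{B}\otimes_{\max}A$ whose integrated form agrees with the given one. In fact you make explicit two points the paper leaves implicit --- lifting the commutation from $\widetilde{S}$ to the fiber operators $S_b$ via equality of commutants, and reducing the norm bound on an arbitrary fiber to the $e$-fiber by the $C^{\ast}$-identity (where only the inequality $\|T(c)\|\leq\sigma'(c)$ is needed, not the equality asserted in the paper's proof).
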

\begin{proof}
Let $\langle \widetilde{S}, R \rangle$ be a $\ast$-representation of $C^{\ast}(\mathscr{B}) \otimes_{max} A$, let $S$ be $\ast$-representation of $\mathscr{B}$ such that $\widetilde{S}$ is the integrated form of $S$. It is easy to verify that 
\begin{equation*}
\|\sum_{i=1}^nS_{b_i}R_{a_i}\|=\|\sum_{i=1}^n b_i \otimes a_i\|_{max} \ \ \ \ (b_i \in B_e, a_i \in A), 
\end{equation*}
thus the map
\begin{equation*}
T: \sum_{i=1}^n b_i \otimes a_i \mapsto \sum_{i=1}^n S_{b_i} R_{a_i} \ \ \ \ (b_i \in B_x, x \in G; a_i \in A),
\end{equation*}
can be extended to a $\ast$-representation of $(\mathscr{B} \otimes_{max} A)^d$. Furthermore, for each $g \in \Gamma$ the map
\begin{equation*}
x \mapsto T_{g(x)}
\end{equation*}
is strongly continuous, thus we conclude that $T$ can be extended to $\ast$-representation of $\mathscr{B} \otimes_{max}A$ whose integrated form is $\langle \widetilde{S}, R \rangle$.
\end{proof}

By Proposition \ref{vnjilfrf} and Proposition \ref{cnhjousd} together, we conclude that $C^{\ast}(\mathscr{B} \otimes_{max}A)$ and $C^{\ast}(\mathscr{B}) \otimes_{max}A$ have same $\ast$-source, thus we have proved the first part of the following proposition:

\begin{proposition}\label{cjjfifshlfgslifs}
For any $C^{\ast}$-algebra $A$, 
\begin{equation}
C^{\ast}(\mathscr{B} \otimes_{max}A)=C^{\ast}(\mathscr{B}) \otimes_{max}A,
\end{equation}
\begin{equation*}
C_r^{\ast}(\mathscr{B} \otimes_{min}A)=C_r^{\ast}(\mathscr{B}) \otimes_{min}A
\end{equation*} 
\end{proposition}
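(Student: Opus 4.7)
The first identity is already deduced from Propositions \ref{vnjilfrf} and \ref{cnhjousd} in the passage preceding the statement, so the real content left to prove is the reduced identity $C_r^{\ast}(\mathscr{B} \otimes_{min}A) = C_r^{\ast}(\mathscr{B}) \otimes_{min}A$. My plan is to exhibit one explicit non-degenerate $\ast$-representation of $\Gamma$ whose $C^{\ast}$-norm simultaneously realizes the reduced norm on $C^{\ast}(\mathscr{B} \otimes_{min}A)$ and the spatial tensor norm on $C_r^{\ast}(\mathscr{B}) \otimes A$. Showing the two prescriptions yield the same norm on $\Gamma$ will give the ``same $\ast$-source'' condition from Section~3, and hence the claimed $\ast$-isomorphism.

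First I fix a faithful non-degenerate $\ast$-representation $S$ of $C^{\ast}(\mathscr{B})$ on a Hilbert space $H_1$ (so in particular $S|B_e$ is faithful on $B_e$), and a faithful non-degenerate $\ast$-representation $R$ of $A$ on a Hilbert space $H_2$. Using the commuting ranges picture of Proposition \ref{cnhjousd} (applied in the opposite direction, which is the routine half), these combine into a $\ast$-representation $T$ of $\mathscr{B} \otimes_{min}A$ on $H_1 \otimes H_2$ via $T_{b\otimes a} = S_b \otimes R_a$. The key point is that the restriction $T|(B_e \otimes_{min} A)$ is the spatial tensor product of the faithful representations $S|B_e$ and $R$, so it is faithful on $B_e \otimes_{min} A$ by the very definition of the minimal $C^{\ast}$-tensor norm.

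Next I invoke the bundle version of Fell's Absorption Theorem (recalled in the Background of the paper): because $T|B_e$ is faithful, the inner tensor product $T \otimes R^G$, as a representation of $\mathscr{B} \otimes_{min}A$, is weakly equivalent to the regular representation of $\mathscr{B} \otimes_{min}A$. Hence the $C^{\ast}$-norm induced by $T \otimes R^G$ on $\Gamma$ is exactly the reduced norm on $C^{\ast}(\mathscr{B} \otimes_{min}A)$. Now I reshuffle tensor factors: since $\pi(b \otimes a) = \pi(b)$ for $b \otimes a \in B_x \otimes A$, on $H_1 \otimes L^2(G) \otimes H_2$ one has the natural identification
\begin{equation*}
(T \otimes R^G)_{b\otimes a} \;=\; (S_b \otimes R^G_{\pi(b)}) \otimes R_a \;=\; (S \otimes R^G)_b \otimes R_a.
\end{equation*}
Applying Fell absorption to $\mathscr{B}$ itself, $S \otimes R^G$ is weakly equivalent to the regular representation of $\mathscr{B}$, so its integrated form factors through a faithful representation of $C_r^{\ast}(\mathscr{B})$ on $H_1 \otimes L^2(G)$. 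Pairing this with the faithful representation $R$ of $A$ and using the standard definition of the spatial tensor product, the norm of $(S \otimes R^G) \otimes R$ on $\Gamma \subset C_r^{\ast}(\mathscr{B}) \otimes A$ coincides with the minimal tensor norm. Combining the two computations, $\Gamma$ carries the same norm whether regarded inside $C_r^{\ast}(\mathscr{B} \otimes_{min}A)$ or inside $C_r^{\ast}(\mathscr{B}) \otimes_{min}A$, which is the required ``same $\ast$-source'' statement.

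The main obstacle I anticipate is the bookkeeping around the tensor-factor reshuffling: one has to check that the inner tensor product $T \otimes R^G$ of the bundle representation $T$ of $\mathscr{B} \otimes_{min}A$ with the unitary representation $R^G$ of $G$ really does, under the canonical flip $H_1 \otimes H_2 \otimes L^2(G) \cong H_1 \otimes L^2(G) \otimes H_2$, become $(S \otimes R^G) \otimes R$ as a representation of $\mathscr{B} \otimes_{min}A$. Once that identification is verified, the rest of the argument is the standard ``faithful $\otimes$ faithful = min'' principle applied to $C_r^{\ast}(\mathscr{B}) \otimes A$, together with the two invocations of Fell absorption described above.
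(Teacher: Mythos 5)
Your argument is correct, but it reaches the reduced identity by a genuinely different mechanism than the paper. The paper never constructs a representation of the whole bundle $\mathscr{B}\otimes_{min}A$ directly: it takes the faithful representation $(T|B_e)\otimes S$ of the unit fiber $B_e\otimes_{min}A$ only, induces it up to $\mathscr{B}\otimes_{min}A$, uses the fact that induction from a faithful representation of the unit fiber is weakly equivalent to the regular representation, and then invokes the identity ${\rm Ind}_{B_e\otimes_{min}A\uparrow\mathscr{B}\otimes_{min}A}\bigl((T|B_e)\otimes S\bigr)={\rm Ind}_{B_e\uparrow\mathscr{B}}(T|B_e)\otimes S$ to recognize the resulting norm on $\Gamma$ as the one coming from (regular representation of $\mathscr{B}$) tensored with a faithful representation of $A$. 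You instead build the spatial bundle representation $T_{b\otimes a}=S_b\otimes R_a$ of $\mathscr{B}\otimes_{min}A$ and apply Fell's Absorption Theorem twice, once to $\mathscr{B}\otimes_{min}A$ and once to $\mathscr{B}$, with a flip of tensor factors in between. What the paper's route buys is that it sidesteps the only nontrivial verification your route needs, namely that $b\otimes a\mapsto S_b\otimes R_a$ really is bounded on every completed fiber $B_x\otimes_{min}A$ and yields a continuous Fell-bundle representation; your appeal to ``the routine converse of Proposition \ref{cnhjousd}'' is too glib as stated, since a merely commuting pair only controls a maximal-type norm, not the minimal one. The gap is easily closed for your specific spatial pair: since the fiber norm of $\mathscr{B}\otimes_{min}A$ satisfies $\|u\|^2=\|u^{\ast}u\|_{B_e\otimes_{min}A}$ for $u\in B_x\otimes A$, and $(S|B_e)\otimes R$ is isometric on $B_e\otimes_{min}A$, the $C^{\ast}$-identity forces $\|\sum_i S_{b_i}\otimes R_{a_i}\|=\|\sum_i b_i\otimes a_i\|$ on every fiber, and continuity then follows from strong continuity along the sections in $\Gamma$, exactly as in Proposition \ref{vnjilfrf}. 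What your route buys in exchange is that it avoids the unproved induction-commutes-with-tensoring identity that the paper asserts, replacing it by the absorption theorem already recalled in the Background together with an elementary reshuffling of tensor factors; with the fiberwise boundedness point patched as above, your proof is complete and of comparable length.
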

\begin{proof}
Let us see the proof of the second part.  Let $T$ be faithful $\ast$-representation of $\mathscr{B}$, then $T|B_e$ is faithful. Let $S$ be a faithful $\ast$-representation of $A$, then $T \otimes S$ is faithful representation of the unit fiber of $\mathscr{B} \otimes_{min}A$, i.e $B_e \otimes_{min}A$, thus 
\begin{equation*}
{\rm{Ind}}_{B_e \otimes_{min}A \uparrow \mathscr{B} \otimes_{min}A}((T|B_e) \otimes S)={\rm{Ind}}_{B_e \uparrow \mathscr{B}}(T|B_e) \otimes S
\end{equation*}
 is weakly equivalent to regular representation of $\mathscr{B} \otimes_{min}A$. On the other hand, ${\rm{Ind}}_{B_e \uparrow \mathscr{B}}(T|B_e)$ is (weakly equivalent to) regular representation of $\mathscr{B}$ and $S$ is faithful $\ast$-representation of $A$, thus $C_r^{\ast}(\mathscr{B} \otimes_{min}A)$ and $C_r^{\ast}(\mathscr{B}) \otimes_{min}A$ have the same $\ast$-source $\Gamma$, they are $\ast$-isomorphic.
\end{proof}

\begin{corollary}\label{cnjfdiiffjjfdc}
If $C^{\ast}(\mathscr{B})$ is nuclear, then $B_e$ is nuclear.
\end{corollary}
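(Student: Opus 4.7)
The plan is to test nuclearity of $B_e$ directly against an arbitrary $C^*$-algebra $A$ by comparing the maximal and minimal norms on the algebraic tensor product $B_e \otimes A$; the bridge to the hypothesis is the identification of the bundle and ordinary tensor products from Proposition \ref{cjjfifshlfgslifs}.

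The key structural ingredient, available from the general Fell--Doran theory in \cite[Chapter VIII]{MR936629}, is that for every $C^*$-algebraic Fell bundle $\mathscr{A}$ the unit fibre $A_e$ sits as an isometric $C^*$-subalgebra of the full cross-sectional algebra $C^*(\mathscr{A})$. Applied to $\mathscr{B}$ itself this gives an isometric embedding $\iota \colon B_e \hookrightarrow C^*(\mathscr{B})$. Applied to the bundle $\mathscr{B} \otimes_{\max} A$, whose unit fibre is by construction the maximal tensor product $B_e \otimes_{\max} A$, it gives an isometric embedding $B_e \otimes_{\max} A \hookrightarrow C^*(\mathscr{B} \otimes_{\max} A)$. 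By Proposition \ref{cjjfifshlfgslifs} the target is $C^*(\mathscr{B}) \otimes_{\max} A$, and checking the correspondence on the dense subalgebra $\Gamma$ via Proposition \ref{vnjilfrf} identifies this second embedding with $\iota \otimes \mathrm{id}_A$ on $B_e \otimes A$.

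With these pieces in place, the proof collapses to the chain
\begin{equation*}
\|x\|_{\max} \;=\; \|(\iota \otimes \mathrm{id}_A)(x)\|_{C^*(\mathscr{B}) \otimes_{\max} A} \;=\; \|(\iota \otimes \mathrm{id}_A)(x)\|_{C^*(\mathscr{B}) \otimes_{\min} A} \;=\; \|x\|_{\min}
\end{equation*}
for $x \in B_e \otimes A$: the first equality is the bundle embedding just described, the second is the nuclearity hypothesis on $C^*(\mathscr{B})$, and the third is the injectivity of the minimal tensor product applied to the isometric inclusion $\iota$. Since $A$ is arbitrary, $B_e$ is nuclear. The only non-routine step is the compatibility check in the previous paragraph, i.e.\ recognizing the Fell-bundle embedding of $B_e \otimes_{\max} A$ into $C^*(\mathscr{B}) \otimes_{\max} A$ as the canonical map $\iota \otimes \mathrm{id}_A$; everything else is either a standard $C^*$-algebra fact (injectivity of $\otimes_{\min}$) or a direct application of the hypothesis.
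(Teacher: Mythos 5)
Your argument hinges on the claim that the unit fibre $B_e$ sits inside $C^{\ast}(\mathscr{B})$ as an isometric $C^{\ast}$-subalgebra (and likewise $B_e\otimes_{\max}A$ inside $C^{\ast}(\mathscr{B}\otimes_{\max}A)$). That is true when $G$ is discrete, but it is false in the generality of this paper, where $G$ is an arbitrary locally compact group: $C^{\ast}(\mathscr{B})$ is a completion of spaces of integrable cross-sections, and a single fibre element is not a cross-section, so the map $\iota$ you use simply does not exist. Already for the trivial line bundle over $G=\mathbb{R}$ one has $C^{\ast}(\mathscr{B})\cong C_0(\mathbb{R})$, which contains no copy of $B_e=\mathbb{C}$ as a $C^{\ast}$-subalgebra (there is no nonzero projection). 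Consequently both the first equality in your chain and the last one (``injectivity of $\otimes_{\min}$ applied to the isometric inclusion $\iota$'') rest on an inclusion that is unavailable, and this is a genuine gap rather than a routine compatibility check.

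The argument can be repaired, but not cheaply: in general $B_e$ only maps into the multiplier algebra $M(C^{\ast}(\mathscr{B}))$, and proving that this map is isometric already requires producing a representation $T$ of $\mathscr{B}$ with $T|B_e$ faithful, e.g.\ ${\rm Ind}_{B_e\uparrow\mathscr{B}}(S)$ for faithful $S$ (via \cite[XI.14.21]{MR936629}); one must then redo your identification inside $M(C^{\ast}(\mathscr{B})\otimes_{\max}A)$ and compute the minimal norm through a faithful representation of the form $(S|B_e)\otimes R$. At that point you are essentially reproducing the paper's actual proof, which avoids embeddings altogether: it takes an arbitrary $C^{\ast}$-norm $\sigma$ on $B_e\otimes A$, induces a faithful representation of $B_e\otimes_{\sigma}A$ up to the bundle, notes via Proposition \ref{cnhjousd} that the induced representation is a representation of $C^{\ast}(\mathscr{B})\otimes_{\max}A$, uses nuclearity of $C^{\ast}(\mathscr{B})$ to weakly dominate it by $S\otimes R$ with $S$, $R$ faithful, and then restricts back to the unit fibre by the weak-containment restriction theorem to get $\sigma\leq\min$. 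So your idea of lifting the norm comparison from $B_e\otimes A$ up to $C^{\ast}(\mathscr{B})$ is the right one, but in the non-discrete setting the passage up and down has to be carried out with induced representations and weak containment, not with a subalgebra inclusion.
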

\begin{proof}
Let $A$ be a $C^{\ast}$-algebra, and $\sigma$ any $C^{\ast}$-norm of $B \otimes A$. Let $S$ and $R$ be non-degenerate faithful $\ast$-representation of $\mathscr{B}$ and $A$ respectively, and $T$ be faithful $\ast$-representation of $B \otimes_{\sigma} A$. Then ${\rm{Ind}}_{B_e \otimes_{\sigma} A \uparrow \mathscr{B} \otimes_{t(\sigma)}}$, which is $\ast$-representation of $C^{\ast}(\mathscr{B}) \otimes_{max} A$ by Proposition \ref{cnhjousd}, is weakly contained in $S \otimes R$ because $C^{\ast}(\mathscr{B})$ is nuclear. Thus by \cite[XI.11.3]{MR936628} we have
\begin{equation*}\begin{split}
T &\leq {\rm{Ind}}_{B_e \otimes_{\sigma} A}(T)
\\& \leq (S \otimes R)| (B_e \otimes_{\sigma} A)
\\&=S|B_e \otimes R,
\end{split}\end{equation*} 
so $\sigma|(B_e \otimes A)$ is equivalent to the minimal $C^{\ast}$-norm of $B_e \otimes A$, this proved that $B_e$ is nuclear.
\end{proof}

Combine Corollary \ref{cnjfdiiffjjfdc} and Proposition \ref{cjjfifshlfgslifs}, we have:
\begin{proposition}\label{utrjkrlsfd}
Let $\mathscr{B}$ be an amenable $C^{\ast}$-algebraic bundle over $G$. Then the $C^{\ast}$-algebra $C^{\ast}(\mathscr{B})=C_r^{\ast}(\mathscr{B})$ is nuclear if and only if: (i) $B_e$ is nuclear; (ii) for any $C^{\ast}$-algebra $A$, $\mathscr{B} \otimes_{min}A$ is amenable. 
\end{proposition}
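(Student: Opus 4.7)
The plan is to derive both implications by combining three already-established ingredients: Corollary \ref{cnjfdiiffjjfdc}, the two identifications of cross-sectional $C^\ast$-algebras in Proposition \ref{cjjfifshlfgslifs}, and the uniqueness statement from the paragraph following Lemma \ref{cnmbjhkdfuuks} (two pre-$C^\ast$-seminorms on $\mathscr{B}\otimes A$ which agree on the unit fibre $B_e\otimes A$ must coincide, so that $\mathscr{B}\otimes_{r_1}A=\mathscr{B}\otimes_{r_2}A$ as $C^\ast$-algebraic bundles). Throughout, amenability of $\mathscr{B}$ is used via the equality $C^\ast(\mathscr{B})=C_r^\ast(\mathscr{B})$.

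For the forward direction, (i) is immediate from Corollary \ref{cnjfdiiffjjfdc}. For (ii), I would fix a $C^\ast$-algebra $A$ and use nuclearity of $B_e$ to see that the maximal and minimal $C^\ast$-norms on $B_e\otimes A$ coincide; the uniqueness statement then forces $\mathscr{B}\otimes_{\max}A=\mathscr{B}\otimes_{\min}A$ as $C^\ast$-algebraic bundles. Applying both equalities in Proposition \ref{cjjfifshlfgslifs} to this single bundle, together with $C^\ast(\mathscr{B})=C_r^\ast(\mathscr{B})$, gives
\[
C^\ast(\mathscr{B}\otimes_{\min}A)=C^\ast(\mathscr{B})\otimes_{\max}A,\qquad C_r^\ast(\mathscr{B}\otimes_{\min}A)=C^\ast(\mathscr{B})\otimes_{\min}A.
\]
Nuclearity of $C^\ast(\mathscr{B})$ equates the two right-hand sides, so the canonical surjection from the full to the reduced cross-sectional $C^\ast$-algebra of $\mathscr{B}\otimes_{\min}A$ is injective, i.e.\ $\mathscr{B}\otimes_{\min}A$ is amenable.

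For the converse, I would assume (i) and (ii) and run the same chain in reverse: the bundle identification $\mathscr{B}\otimes_{\min}A=\mathscr{B}\otimes_{\max}A$ from (i) together with the two halves of Proposition \ref{cjjfifshlfgslifs} again yields the displayed identities above; then (ii) says the two left-hand sides coincide, and amenability of $\mathscr{B}$ lets me rewrite the right-hand sides as $C^\ast(\mathscr{B})\otimes_{\max}A=C^\ast(\mathscr{B})\otimes_{\min}A$. Since $A$ was arbitrary, this is the definition of nuclearity of $C^\ast(\mathscr{B})$. The main obstacle I anticipate is really just the careful invocation of the bundle-level uniqueness statement---verifying that nuclearity of $B_e$ identifies $\mathscr{B}\otimes_{\max}A$ and $\mathscr{B}\otimes_{\min}A$ on the nose as $C^\ast$-algebraic bundles (not merely up to an algebraic comparison), so that Proposition \ref{cjjfifshlfgslifs} can legitimately be applied to both norms simultaneously against the same cross-sectional algebra. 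Everything else is bookkeeping with the $\ast$-source identifications.
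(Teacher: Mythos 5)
Your proposal is correct and is essentially the paper's own argument: the paper proves this proposition simply by ``combining'' Corollary \ref{cnjfdiiffjjfdc} with Proposition \ref{cjjfifshlfgslifs}, and your write-up fills in exactly that combination, including the one genuinely needed auxiliary fact (the uniqueness of a pre-$C^{\ast}$-seminorm on $\mathscr{B}\otimes A$ extending a given norm on $B_e\otimes A$, stated in the paragraph where $\mathscr{B}\otimes_r A$ is defined), which identifies $\mathscr{B}\otimes_{\max}A$ with $\mathscr{B}\otimes_{\min}A$ when $B_e$ is nuclear. No gaps; your only slip is a cosmetic mis-location of where that uniqueness remark appears in the text.
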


\section{Approximation Property of $C^{\ast}$-Algebraic Bundles}

Recall from Exel and Ng \cite{MR1891686}, for any $\alpha, \beta \in \mathscr{L}_2(G, B_e)$ we can define a map $\Phi_{\alpha, \beta}: B \to B$ by
\begin{equation*}
\Phi_{\alpha, \beta}(b)=\alpha \cdot b \cdot \beta=\int_G \alpha(x)^{\ast}b \beta(\pi(b)^{-1}x)dx \in B_{\pi(b)} \ \ \ \ (b \in B)
\end{equation*}
and by \cite[Lemma 3.2]{MR1891686} $\Phi$ induces a map $\Psi_{\alpha, \beta}: \mathscr{L}(\mathscr{B}) \to \mathscr{L}(\mathscr{B})$ defined by
\begin{equation*}
\Psi_{\alpha, \beta}(f)(y)=\int_G \alpha(x)^{\ast}f(y)\beta(y^{-1}x)dx=\Phi_{\alpha, \beta}(f(y)) \ \ \ \ (f \in \mathscr{L}(\mathscr{B})).
\end{equation*}

\begin{definition}\label{vnilfsdcsasasd}
$\mathscr{B}$ is said to have AP (i.e approximation property) if there is $M>0$ and nets $\{\alpha_i\}, \{\beta_i\} \subset \mathscr{L}(G, B_e)$ such that: (i) ${\rm{sup}}_i \|\alpha_i\|\|\beta_i\| \leq M$; (ii) $\Psi_{\alpha_i, \beta_i}(b) \to b$ uniformly  on compact slices of $B$ (\cite[Definition 3.6]{MR1891686}).   
\end{definition}

Let $T$ be any non-degenerate $\ast$-representation of $\mathscr{B}$ such that $r=T|B_e$ is faithful, let $\mu_{T}=T \otimes R^G$. For each $\gamma \in \mathscr{L}(G, B_e)$ we define $V^T_{\gamma}: X(T) \to \mathscr{L}_2(G,X(T))$ by
\begin{equation*}
V^T_{\gamma}(\xi)=r(\gamma(x))(\xi) \ \ \ \ (\xi \in X(T)).
\end{equation*}
By the proof of \cite[Lemma 3.1]{MR1891686}, for any $\alpha, \beta \in  \mathscr{L}(G, B_e)$ we have
\begin{equation}\label{ruhunhjvxf}
\widetilde{T}(\Psi_{\alpha,\beta}(f))(\xi)=(V^{T}_{\alpha})^{\ast}\widetilde{\mu_{T}}(f)V^T_{\beta}(\xi) \ \ \ \ (\xi \in X(T)).
\end{equation}
This motivated us to give the following definition:

\begin{definition}\label{tjfdlksjlkfd}
We say that $\mathscr{B}$ have Ultra-Approximation Property (UAP) if there is a net $\{\Psi_i\}$ of maps $\Psi_i: \mathscr{L}_1(\mathscr{B}) \to  \mathscr{L}_1(\mathscr{B})$ such that that for any non-degenerate $\ast$-representation $T$ of $\mathscr{B}$ there are nets $\{V_i\}_{i \in I}, \{W_i\}_{i \in I} \subset \mathcal{O}(X(T), \mathscr{L}_2(G))$ satisfy the following:

i. we have
\begin{equation}\label{vbnhjdfivnfer}
\widetilde{T}(\Psi_i(f))=W_i^{\ast} \widetilde{\mu_T}(f)V_i \ \ \ \ (f \in \mathscr{L}_1(\mathscr{B})).
\end{equation}
Furthermore, if $R \in \mathcal{O}(X(T))$ is in the commuting algebra of $T$ we have
\begin{equation}\label{vbhodfiufd}
V_iR=(R \otimes 1_{\mathcal{O}(\mathscr{L}_2(G))})V_i;
\end{equation}

ii. there is constant $M>0$ such that $\|V_i\|, \|W_i\|\leq M$ for all $i$;

iii. For any $f \in \mathscr{L}_1(\mathscr{B})$, $\Psi_i(f) \to f$ in the norm of $C^{\ast}(\mathscr{B})$.

If these conditions hold, we say that $\{W_i\}_{i \in I}, \{V_i\}_{i \in I}$ are nets of $\{\Psi_i\}_{i \in I}$ under $T$.
\end{definition}

It is easy to prove that if $\mathscr{B}$ has UAP then $\mathscr{B}$ is amenable, and by (\ref{ruhunhjvxf}) if $\mathscr{B}$ has AP it has UAP.

\begin{remark}\label{runglnkdff}
To construct UAP, we usually define a map in a dense subset of $\mathscr{L}_1(\mathscr{B})$ and then extend it. In order to accomplish this, we need the following general easy observation: Let $\mathscr{B}$ be an arbitrary $C^{\ast}$-algebraic bundle over $G$, and $T$ be a faithful $\ast$-representation of $C^{\ast}(\mathscr{B})$. Then by \cite[VIII.16.4]{MR936629} we can identify each $f \in \mathscr{L}_1(\mathscr{B})$ with $T_f$. Now let $M$ be a norm-dense subset of $\mathscr{L}_1(\mathscr{B})$, $F_1: M \to \mathscr{L}_1(\mathscr{B})$ be a map. If $F_2: T(\mathscr{L}_1(\mathscr{B})) \to \mathcal{O}(X(T))$ is a continuous map  such that $F_2(T_f)=T_{F_1(f)}$ for each $f \in M$, then we can conclude that $F_2(T(\mathscr{L}_1(\mathscr{B}))) \subset T(\mathscr{L}_1(\mathscr{B}))$, thus $F_2$ is a map from $\mathscr{L}_1(\mathscr{B})$ into $\mathscr{L}_1(\mathscr{B})$ as an extension of $F_1$, furthermore it is easy to verify that $F_1$ is continuous with respect to the norm of $\mathscr{L}_1(\mathscr{B})$.
\end{remark}

\begin{proposition}\label{vnnisfdliasiooa}
If $\mathscr{B}$ has UAP, then $\mathscr{D}$ has UAP, in particular it is amenable.
\end{proposition}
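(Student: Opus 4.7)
The plan is, for each non-degenerate $\ast$-representation $\Pi$ of $\mathscr{D}$---identified with its system of imprimitivity $\langle T',P'\rangle$ for $\mathscr{B}$ over $M$ on $X(\Pi)=X(T')$---to build the UAP data for $\mathscr{D}$ from the UAP data that $\mathscr{B}$'s UAP supplies for the $\mathscr{B}$-representation $T'$.

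First I would define $\Psi_i'\colon\mathscr{L}_1(\mathscr{D})\to\mathscr{L}_1(\mathscr{D})$ fibrewise in $m\in M$ by $\Psi_i'(\phi)(x)(m):=\Psi_i(\phi_{\cdot,m})(x)$, where $\phi_{\cdot,m}\in\mathscr{L}_1(\mathscr{B})$ is the section $y\mapsto\phi(y)(m)\in B_y$. This would be set up first on the dense span of elementary sections $\phi(x)(m)=f(x)g(m)$ with $f\in\mathscr{L}(\mathscr{B})$ and $g\in C_c(M)$, and then extended to all of $\mathscr{L}_1(\mathscr{D})$ via Remark \ref{runglnkdff}. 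For the operators I would simply take $V_i,W_i$ on $X(T')=X(\Pi)$ to be those supplied by $\mathscr{B}$'s UAP applied to $T'$. Conditions (ii) and (iii) of Definition \ref{tjfdlksjlkfd} then transfer essentially for free: any $R$ commuting with $\Pi$ commutes in particular with $T'$, so (ii) is inherited, while (iii) should follow from the pointwise-in-$m$ convergence $\Psi_i(\phi_{\cdot,m})\to\phi_{\cdot,m}$ combined with the uniform operator bounds on $\{V_i\},\{W_i\}$.

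The content is condition (i). A direct computation shows that $\widetilde{\Pi}(\psi)=\int_{M}dP'(m)\,\widetilde{T'}(\psi_{\cdot,m})$ and $\widetilde{\mu_{\Pi}}(\phi)=\int_{M}dP'(m)\,\widetilde{\mu_{T'}}(\phi_{\cdot,m})$. Substituting $\Psi_i'(\phi)$ for $\psi$ in the first and invoking $\mathscr{B}$'s UAP identity $\widetilde{T'}(\Psi_i(\phi_{\cdot,m}))=W_i^{\ast}\widetilde{\mu_{T'}}(\phi_{\cdot,m})V_i$ fibrewise in $m$ yields $\widetilde{\Pi}(\Psi_i'(\phi))=\int_{M}dP'(m)\,W_i^{\ast}\widetilde{\mu_{T'}}(\phi_{\cdot,m})V_i$, and the remaining task is to interchange $W_i^{\ast}$ and $V_i$ with the spectral integral so as to recognise this as $W_i^{\ast}\widetilde{\mu_{\Pi}}(\phi)V_i$. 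This interchange is the main obstacle: it reduces to the commutation of $V_i,W_i$ with the spectral projections $P'(g)\otimes 1_{\mathscr{L}_2(G)}$ for $g\in C_c(M)$. By the imprimitivity relation $T'_bP'(E)=P'(\pi(b)E)T'_b$, each $P'(g)$ lies in the commutant of $T'|_{B_e}$ (though not in the commutant of all of $T'$), which is precisely the commutation enjoyed by $V^T_\alpha$ in the model AP case since there $\alpha(x)\in B_e$. The required interchange should therefore follow by applying condition (ii) of Definition \ref{tjfdlksjlkfd} with $R=P'(g)$, interpreted relative to the commutant of $T'|_{B_e}$; with this, all three UAP conditions for $\mathscr{D}$ hold, and the amenability of $\mathscr{D}$ follows formally from the observation that UAP implies amenability (noted just after Definition \ref{tjfdlksjlkfd}).
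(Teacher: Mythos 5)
Your argument is essentially the paper's own proof: the same definition $\Phi'_i(fg)=\Phi_i(f)g$ on the dense span of elementary sections, extension via Remark \ref{runglnkdff}, the same choice of $V_i,W_i$ supplied by the UAP of $\mathscr{B}$ applied to the $\mathscr{B}$-part $T'$ of the system of imprimitivity, and the same verification of condition (i) by commuting $V_i$ past the $\mathscr{C}_0(M)$-part of the representation of $\mathscr{D}$. The interchange you single out as the main obstacle is carried out in the paper in exactly the same way---there (\ref{vbhodfiufd}) is applied to the operators $R_r$, which likewise commute only with $T'|B_e$ and not with all of $T'$ because of the imprimitivity relation---so your caveat about reading that relation relative to the commutant of $T'|B_e$ identifies an imprecision in Definition \ref{tjfdlksjlkfd} that the paper's proof shares (and that the motivating AP operators $V^T_\gamma$ do satisfy), rather than a gap peculiar to your argument.
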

\begin{proof}
Let $\{\Phi_i\}_{i \in I}$ be UAP of $\mathscr{B}$. Let $\Gamma$ be the linear span of the cross-sections with the form $x \mapsto f(x) r$ for some $f \in\mathscr{L}(\mathscr{B})$ and $r \in \mathscr{C}_0(M)$. Define $\Phi': \Gamma \to \mathscr{L}_2(\mathscr{D})$ by
\begin{equation*}
\Phi'_i(fr)=\Phi_i(f)r \ \ \ \  f \in\mathscr{L}(\mathscr{B}), r \in \mathscr{C}_0(M).
\end{equation*}

Let $\langle T, R \rangle$ be a faithful $\ast$-representation of $\mathscr{D}$, and $\{V_i\}_{i \in I}$ and $\{W_i\}_{i \in I}$ be the nets of $\{\Phi_i\}$ under $T$. Define
\begin{equation*}
\phi: \langle T, R \rangle(\mathscr{L}_1(\mathscr{D}) \to \mathcal{O}(X(\langle T, R \rangle))
\end{equation*}
by
\begin{equation}\label{djlfdslkvf}
\phi(\langle T, R \rangle(fr))=(W_i)^{\ast}{\mu}_{\langle T, R \rangle}(fr)V_i \ \ \ \ (f \in \mathscr{L}(\mathscr{B}), r \in \mathscr{C}_0(M)).
\end{equation}
By Remark \ref{runglnkdff} if we can prove that 
\begin{equation}\label{eijlmldtmvrg}
\phi(\langle T, R \rangle(\Phi'_i(fr))=T(\Phi_i(f)) R_r \ \ \ \ (f \in \mathscr{L}(\mathscr{B}), r \in \mathscr{C}_0(M))
\end{equation}
then each $\Phi'_i$ is extendable to a map from $\mathscr{L}_1(\mathscr{B})$ into $\mathscr{L}_1(\mathscr{B})$. (\ref{eijlmldtmvrg}) is derived by the following:
\begin{equation*}\begin{split}
\phi(\langle T, R \rangle(fr))&=(W_i)^{\ast}{\mu}_{\langle T, R \rangle}(fr)V_i
\\&=(W_i)^{\ast} \mu_T(f) R_r \otimes 1_{\mathcal{O}(\mathscr{L}_2(G))}V_i
\\&=(W_i)^{\ast} \mu_T(f)V_iR_r
\\&={T}(\Phi_i(f))R_r.
\end{split}\end{equation*}
Now let us verify that $\Phi'_i$ satisfies \ref{tjfdlksjlkfd}(i). Let $\langle T, R \rangle$ be arbitrary $\ast$-representation of $\mathscr{B} \otimes_{r}A$, we have
\begin{equation*}\begin{split}
\langle T, R \rangle(\Phi'_i(fr))&=T(\Phi_i(f))R_r
\\&=(W_i)^{\ast}\mu_T(f)V_iR_r
\\&=(W_i)^{\ast}\mu_T(f)R_ar\otimes 1_{\mathcal{O}(\mathscr{L}_2(G))} V_i
\\&=(W_i)^{\ast} \mu_{\langle T, R \rangle}(fr) V_i
\end{split}\end{equation*}
for all $f \in \mathscr{L}(\mathscr{B})$ and $r \in \mathscr{C}_0(M)$, by the linearity and continuity of $\Phi'_i$, (\ref{vbnhjdfivnfer}) is proved. Furthermore, notice that any operator which is in the commuting algebra of $\langle T, R \rangle$ is in the commuting algebra of $T$, (\ref{vbhodfiufd}) holds.
 The verification of \ref{tjfdlksjlkfd}(ii)and \ref{tjfdlksjlkfd}(iii) are routine, we omit them.
\end{proof}

\begin{proposition}\label{cnhjfdskhjfsd}
If $\mathscr{B}$ has UAP, then $\mathscr{B} \otimes_{r}A$ has UAP, in particular it is amenable.
\end{proposition}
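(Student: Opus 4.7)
The plan is to adapt the strategy of Proposition \ref{vnnisfdliasiooa} to the bundle $\mathscr{B} \otimes_r A$, with the decomposition of Proposition \ref{cnhjousd} now playing the role that was previously played by the system-of-imprimitivity decomposition. Throughout I would fix a UAP family $\{\Phi_i\}_{i \in I}$ for $\mathscr{B}$ and let $\Gamma$ denote the linear span of cross-sections of the form $x \mapsto f(x) \otimes a$ with $f \in \mathscr{L}(\mathscr{B})$ and $a \in A$, which is norm-dense in $\mathscr{L}_1(\mathscr{B} \otimes_r A)$.

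First I would define maps $\Phi'_i: \Gamma \to \mathscr{L}_1(\mathscr{B} \otimes_r A)$ by $\Phi'_i(f \otimes a) = \Phi_i(f) \otimes a$, extended linearly. To show that each $\Phi'_i$ extends to $\mathscr{L}_1(\mathscr{B} \otimes_r A)$, I would invoke Remark \ref{runglnkdff}: let $T$ be a faithful non-degenerate $\ast$-representation of $\mathscr{B} \otimes_r A$, so by Proposition \ref{cnhjousd} one has $T_{b \otimes a} = S_b R_a$ for $\ast$-representations $S$ of $\mathscr{B}$ and $R$ of $A$ with mutually commuting ranges. Choose nets $\{V_i\}, \{W_i\}$ of $\{\Phi_i\}$ under $S$ and define the operator-side map
\begin{equation*}
\phi\bigl(T(f \otimes a)\bigr) = W_i^{\ast}\,\widetilde{\mu_T}(f \otimes a)\,V_i \qquad (f \in \mathscr{L}(\mathscr{B}),\ a \in A).
\end{equation*}
Since $\widetilde{\mu_T}(f \otimes a) = \widetilde{\mu_S}(f)\bigl(R_a \otimes 1_{\mathcal{O}(\mathscr{L}_2(G))}\bigr)$ and $R_a$ lies in the commuting algebra of $S$, property (\ref{vbhodfiufd}) gives
\begin{equation*}
W_i^{\ast}\,\widetilde{\mu_S}(f)(R_a \otimes 1)V_i = W_i^{\ast}\,\widetilde{\mu_S}(f)\,V_i R_a = \widetilde{S}(\Phi_i(f))R_a = T\bigl(\Phi'_i(f \otimes a)\bigr),
\end{equation*}
so Remark \ref{runglnkdff} extends each $\Phi'_i$ to a map $\mathscr{L}_1(\mathscr{B} \otimes_r A) \to \mathscr{L}_1(\mathscr{B} \otimes_r A)$ satisfying (\ref{vbnhjdfivnfer}) under the given $T$.

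Once the extension is in place, conditions \ref{tjfdlksjlkfd}(i) and \ref{tjfdlksjlkfd}(ii) follow with the very same nets $\{V_i\},\{W_i\}$ for every non-degenerate $\ast$-representation of $\mathscr{B} \otimes_r A$: the commutant-compatibility (\ref{vbhodfiufd}) transfers because any operator commuting with $T$ in particular commutes with $S$, and the uniform bound $\|V_i\|, \|W_i\| \leq M$ is exactly the one supplied by $\mathscr{B}$. The amenability of $\mathscr{B} \otimes_r A$ will then follow from the remark already made after Definition \ref{tjfdlksjlkfd}.

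The main obstacle I expect is the verification of condition \ref{tjfdlksjlkfd}(iii), namely $\Phi'_i(g) \to g$ in the $C^{\ast}$-norm of $\mathscr{B} \otimes_r A$ for every $g \in \mathscr{L}_1(\mathscr{B} \otimes_r A)$. On elementary tensors $g = f \otimes a \in \Gamma$ this should reduce, via Lemma \ref{cxmhhj} and the fact that $\|\Phi'_i(f)\otimes a - f \otimes a\|_{\max} \leq \|a\|\,\|\Phi_i(f)-f\|_{C^{\ast}(\mathscr{B})}$, to the corresponding convergence for $\mathscr{B}$. The passage to arbitrary $g$ then rests on a three-epsilon argument using the density of $\Gamma$ together with a \emph{uniform} $C^{\ast}(\mathscr{B} \otimes_r A)$-norm bound on $\Phi'_i$, which itself follows from the already-established identity (\ref{vbnhjdfivnfer}) and $\|V_i\|, \|W_i\| \leq M$; carefully ensuring that these norm estimates genuinely descend from the maximal to the quotient cross-sectional $C^{\ast}$-norm is the delicate point.
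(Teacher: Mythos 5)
Your proposal is correct and takes essentially the same route as the paper: the paper's proof defines $\Phi'_i(f\otimes a)=\Phi_i(f)\otimes a$ on the same dense span $\Gamma$ and simply appeals to ``the same argument as Proposition \ref{vnnisfdliasiooa}'', which is precisely the extension via Remark \ref{runglnkdff} that you carry out using the decomposition of Proposition \ref{cnhjousd} together with the commutant condition (\ref{vbhodfiufd}). Your treatment of conditions (ii) and (iii), including the uniform bound and the density argument, fills in exactly the steps the paper declares routine.
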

\begin{proof}
Let $\{\Phi_i\}_{i \in I}$ be UAP of $\mathscr{B}$. Let $\Gamma$ be the linear span of the cross-sections with the form $x \mapsto f(x) \otimes a$ for some $f \in\mathscr{L}(\mathscr{B})$ and $a \in A$. Define $\Phi_i': \Gamma \to \mathscr{L}_2(\mathscr{B} \otimes_{r}A)$ by
\begin{equation*}
\Phi'_i(f \otimes a)=\Phi_i(f) \otimes a \ \ \ \  f \in\mathscr{L}(\mathscr{B}), a \in A.
\end{equation*}
By the same argument of Proposition \ref{vnnisfdliasiooa} we can prove that each $\Phi'_i$ can be extended to a map from $\mathscr{L}_1(\mathscr{B} \otimes_{r}A)$ to itself, which we still denote by $\Phi'_i$, such that $\{\Phi'_i\}_{i \in I}$ is the UAP of $\mathscr{B} \otimes_{r}A$.
\end{proof}

\begin{remark}\label{vkjliidasdsjigfo}
In Proposition \ref{cnhjfdskhjfsd}, if $\mathscr{B}$ has AP, it is hard to check whether $B \otimes_{r}A$ has AP because we do not know how to identify compact slices in $\mathscr{B} \otimes_{r}A$ in order to check \ref{vnilfsdcsasasd} (ii). The same difficulty occurs in the study of the transformation bundle, and even worst it is hard to see how to define the AP net of $\mathscr{D}$ according to the AP net of $\mathscr{B}$. In this sense, UAP is a more economic concept than AP.

Combine Proposition \ref{vnnisfdliasiooa} and Proposition \ref{cnhjfdskhjfsd}, we can construct many ``non-trivial'' examples of $C^{\ast}$-algebraic bundles which have UAP. For instance, let $(G,M)$ be an amenable $G$-transformation group, by Claire Anantharaman-Delaroche \cite[ Lemma 2.4]{ejjlklds}  it is easy to verify that the semi-direct product bundle $(\mathscr{C}_0(M), G)$ has AP, thus has UAP. Now for any $C^{\ast}$-algebra $A$ and anothoer transformation group $(G,M')$, we can form tensor product of $(\mathscr{C}_0(M), G)$ and $A$, and furthermore the transformation bundle over $G$ derived from this tensor product, all of them have UAP, so they are all amenable. But it is difficult to check whether they have AP. 
\end{remark}

The following is our main theorem:

\begin{theorem}\label{vnnjsdliclsasod}
Let $\mathscr{B}$ be a saturated $C^{\ast}$-algebraic bundle over $G$ with UAP (in particular if $\mathscr{B}$ has AP). Then for any closed subgroup $H \subset G$ the restriction bundle $\mathscr{B}_H$ is amenable, and $C^{\ast}(\mathscr{B}_H)=C_r^{\ast}(\mathscr{B}_H)$ is nuclear if and only if $B_e$ is nuclear.
\end{theorem}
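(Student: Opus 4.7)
The plan is to handle the amenability of $\mathscr{B}_H$ first, and then bootstrap to the nuclearity statement via Proposition \ref{utrjkrlsfd}. The guiding idea throughout is to exploit the Morita equivalence from \S 1 between $C^{\ast}(\mathscr{B}_H)$ and $C^{\ast}(\mathscr{D})$ (available because $\mathscr{B}$ is saturated), together with Theorem \ref{vnkjlslinvrfd}, which says this equivalence respects regular representations.

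For the amenability of $\mathscr{B}_H$, I would first invoke Proposition \ref{vnnisfdliasiooa} to conclude that the transformation bundle $\mathscr{D}$ inherits UAP and is therefore amenable. Then, given a faithful non-degenerate $\ast$-representation $S$ of $C^{\ast}(\mathscr{B}_H)$, the induced system $\langle T, P\rangle=\mathrm{Ind}_{\mathscr{B}_H\uparrow\mathscr{B}}(S)$ is faithful on $C^{\ast}(\mathscr{D})$ by the Morita equivalence. Amenability of $\mathscr{D}$ forces $\langle T,P\rangle$ to be weakly equivalent to the regular representation of $\mathscr{D}$, and Theorem \ref{vnkjlslinvrfd} then returns that $S$ is weakly equivalent to the regular representation of $\mathscr{B}_H$. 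Since $S$ was faithful, the regular representation of $\mathscr{B}_H$ is faithful as well, i.e.\ $\mathscr{B}_H$ is amenable.

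For nuclearity, the ``only if'' direction is Corollary \ref{cnjfdiiffjjfdc} applied to $\mathscr{B}_H$, whose unit fiber is $B_e$. For the converse, having established amenability of $\mathscr{B}_H$, I would appeal to Proposition \ref{utrjkrlsfd}: it remains to show that $\mathscr{B}_H \otimes_{min} A$ is amenable for every $C^{\ast}$-algebra $A$. The natural route is to apply the same amenability argument to the bundle $\mathscr{B}\otimes_{min} A$, which is saturated (inherited fiberwise from $\mathscr{B}$, using an approximate unit of $A$) and has UAP by Proposition \ref{cnhjfdskhjfsd}. Since $(\mathscr{B}\otimes_{min} A)_H=\mathscr{B}_H\otimes_{min} A$, the desired amenability follows.

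The principal obstacle lies in the first paragraph's use of the Morita equivalence: I need to know that Rieffel induction across the $\mathscr{L}(\mathscr{B})$-bimodule preserves faithfulness of representations, and that the induced systems of imprimitivity are precisely the $\ast$-representations of $\mathscr{D}$ that can appear. The first is standard for equivalence bimodules, and the second, together with the matching of regular representations under induction, is exactly the content of Theorem \ref{vnkjlslinvrfd}. All remaining checks (that $\mathscr{B}\otimes_{min} A$ is saturated and that its restriction to $H$ agrees with $\mathscr{B}_H\otimes_{min} A$) are fiberwise and routine.
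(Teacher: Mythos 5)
Your proposal is correct and follows essentially the same route as the paper, whose proof is just a terse citation of Theorem \ref{vnkjlslinvrfd}, Proposition \ref{utrjkrlsfd}, Proposition \ref{vnnisfdliasiooa} and Proposition \ref{cnhjfdskhjfsd}: amenability of $\mathscr{B}_H$ via the transformation bundle over $G/H$ having UAP plus the imprimitivity/regular-representation correspondence, and nuclearity by feeding $\mathscr{B}\otimes_{min}A$ (saturated, with UAP) back through the same argument to verify condition (ii) of Proposition \ref{utrjkrlsfd}. You merely make explicit the routine checks (faithfulness under Rieffel induction, saturation of $\mathscr{B}\otimes_{min}A$, and $(\mathscr{B}\otimes_{min}A)_H=\mathscr{B}_H\otimes_{min}A$) that the paper leaves implicit.
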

\begin{proof}
By Theorem \ref{vnkjlslinvrfd} and Proposition \ref{vnnisfdliasiooa} $\mathscr{B}_H$ is amenable. The proof of the other part is the combination of Theorem \ref{vnkjlslinvrfd}, Proposition \ref{utrjkrlsfd}, Proposition \ref{vnnisfdliasiooa} and Proposition \ref{cnhjfdskhjfsd}.
\end{proof}

The `if' part of the following corollary is well-known in varied specific forms:

\begin{corollary}
If $G$ is amenable locally compact group and $\mathscr{B}$ is a saturated $C^{\ast}$-algebraic bundle over $G$, then $C^{\ast}(\mathscr{B})=C_r^{\ast}(\mathscr{B})$ is nuclear if and only if $B_e$ is nuclear. 
\end{corollary}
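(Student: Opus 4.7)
The plan is to reduce this corollary to Theorem \ref{vnnjsdliclsasod} by specialising to $H = G$, in which case $\mathscr{B}_H = \mathscr{B}$. The only hypothesis of that theorem still to be checked is that $\mathscr{B}$ has UAP, and I would do this by establishing the stronger property AP (Definition \ref{vnilfsdcsasasd}) directly, using amenability of $G$ together with saturation of $\mathscr{B}$.

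To construct the AP net, amenability of $G$ furnishes a net $\{f_i\} \subset \mathscr{L}(G)$ with $\|f_i\|_2 = 1$ and $\|R^G_y f_i - f_i\|_2 \to 0$ uniformly for $y$ in compact subsets of $G$ (the usual Folner condition expressing that $R^G$ has almost invariant vectors); in particular $\langle R^G_y f_i, f_i\rangle \to 1$ uniformly on compacta. Pick in addition a bounded self-adjoint approximate unit $\{u_j\}$ for $B_e$ and set $\alpha_{i,j}(x) = \beta_{i,j}(x) = f_i(x)\,u_j \in \mathscr{L}(G, B_e)$, whose $\mathscr{L}_2$-norms are uniformly bounded. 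A direct computation from the Exel--Ng formula then gives, for $b \in B_y$,
\[
\Psi_{\alpha_{i,j},\beta_{i,j}}(b) = \left(\int \overline{f_i(x)}\, f_i(y^{-1}x)\,dx\right) u_j b u_j = \langle R^G_y f_i, f_i\rangle \cdot u_j b u_j.
\]
As $b$ ranges over a compact slice $K \subset B$, the projection $\pi(K)$ is compact in $G$, so the scalar factor tends to $1$ uniformly in $y = \pi(b)$; and $u_j b u_j \to b$ uniformly on $K$ by the standard fact that a bounded approximate unit of $B_e$ acts uniformly on compacta, transported to the bundle by joint continuity of the multiplication. A diagonal subnet in $(i,j)$ packages this into a single bounded AP net, so $\mathscr{B}$ has AP and hence UAP.

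With UAP in hand, Theorem \ref{vnnjsdliclsasod} applied with $H = G$ immediately yields both that $\mathscr{B}$ is amenable, hence $C^{*}(\mathscr{B}) = C_r^{*}(\mathscr{B})$, and the nuclearity equivalence $C^{*}(\mathscr{B})$ nuclear if and only if $B_e$ is nuclear. The main technical point is the uniform-on-compact-slices step: one must combine the Folner and approximate-unit limits into a single bounded net, which is handled by a routine diagonal argument, but care is needed to verify that both halves of the approximation remain uniform over a compact slice in the appropriate topology of the Fell bundle.
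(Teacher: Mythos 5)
Your proposal is correct and follows exactly the route the paper intends: specialize Theorem \ref{vnnjsdliclsasod} to $H=G$ (so $\mathscr{B}_H=\mathscr{B}$) after noting that amenability of $G$ gives $\mathscr{B}$ the approximation property, hence UAP. The only difference is that you spell out the Exel--Ng-style AP net $\alpha_{i,j}(x)=f_i(x)u_j$ built from Reiter-type almost invariant vectors and an approximate unit of $B_e$, a construction the paper would simply cite as well known; your verification of it (including uniformity on compact slices) is sound.
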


\bibliographystyle{plain}

\bibliography{referencelist}

\end{document}